\documentclass[11pt,english]{article}
\usepackage[T1]{fontenc}
\usepackage[latin9]{inputenc}
\usepackage{geometry}
\geometry{verbose,tmargin=1.25in,bmargin=1.25in,lmargin=1.25in,rmargin=1.25in}
\usepackage{verbatim}
\usepackage{float}
\usepackage{units}
\usepackage{amsmath}
\usepackage{amsthm}
\usepackage{amssymb}
\usepackage{graphicx}
\usepackage{setspace}
\onehalfspacing

\makeatletter

\floatstyle{ruled}
\newfloat{algorithm}{tbp}{loa}
\providecommand{\algorithmname}{Algorithm}
\floatname{algorithm}{\protect\algorithmname}

\theoremstyle{plain}
\newtheorem{thm}{\protect\theoremname}[section]
\theoremstyle{definition}
\newtheorem{example}[thm]{\protect\examplename}
\theoremstyle{definition}
\newtheorem{defn}[thm]{\protect\definitionname}
\theoremstyle{plain}
\newtheorem{lem}[thm]{\protect\lemmaname}
\ifx\proof\undefined
\newenvironment{proof}[1][\protect\proofname]{\par
\normalfont\topsep6\p@\@plus6\p@\relax
\trivlist
\itemindent\parindent
\item[\hskip\labelsep\scshape #1]\ignorespaces
}{%
\endtrivlist\@endpefalse
}
\providecommand{\proofname}{Proof}
\fi

\makeatother

\usepackage{babel}
\providecommand{\definitionname}{Definition}
\providecommand{\examplename}{Example}
\providecommand{\lemmaname}{Lemma}
\providecommand{\theoremname}{Theorem}

\begin{document}

\title{$\theta$-parareal schemes }

\author{Gil Ariel\thanks{Bar-Ilan University, Ramat Gan, Israel}, Hieu Nguyen\thanks{The University of Texas at Austin, USA}
and Richard Tsai\thanks{The University of Texas at Austin, USA and KTH Royal Institute of
Technology, Sweden }}
\maketitle
\begin{abstract}
A weighted version of the parareal method for parallel-in-time computation
of time dependent problems is presented. Linear stability analysis
for a scalar weighing strategy shows that the new scheme may enjoy
favorable stability properties with marginal reduction in accuracy
at worse. More complicated matrix-valued weights are analyzed and
applied in numerical examples. The weights are optimized using information
from past iterations, providing a systematic framework for using the
parareal iterations as an approach to multiscale coupling. The advantage
of the method is demonstrated using numerical examples, including
some well-studied nonlinear Hamiltonian systems.
\end{abstract}

\section{Introduction}

Parallelization of computation for spatial domain, such as the standard
domain decomposition methods, has been extensively developed and successfully
applied to many important applications. Due to causality, parallel-in-time
computations have not been as successful as parallel computations
in space. However, numerical simulations will not benefit from available
exa-scale computing power unless parallelization-in-time can be performed.
Despite recent advances, the presence of strong causalities in the
sense that local perturbations are not damped out by the system's
dissipation, e.g. in hyperbolic problems and fast oscillations in
the solutions, typically hinders the efficiency of such types of algorithms.
For example, approaches involving shooting and Newton's solvers may
become virtually unusable. It is widely recognized that robust and
convergent numerical computation using such parallel-in-time algorithms
still remains a main challenge. 

Several attempts for designing time-parallel algorithms for evolutionary
problems have been proposed. The common idea is to decompose the time
domain of interest into several subintervals. In each subinterval,
the given equation is solved in parallel with time-boundary conditions
given at one or both ends of each subinterval. The time-boundary conditions
are coupled via some specific algorithms, typically of iterative nature.
With multiple shooting methods, see e.g. \cite{keller1976numerical,kiehl1994parallel},
one solves a two-point boundary value problem in each subinterval,
and uses a Newton's iterations to couple all the boundary conditions
together. Particularly for problems with oscillations, Newton iterations
may not converge. A different approach, termed parareal, was proposed
by Lions, Maday and Turinici in \cite{parareal-LMT01}. In the parareal
framework, one solves an initial value problem in each subinterval
with a high-accuracy ``fine solver'', starting from the time-boundary
conditions computed by a stable ``coarse'' solver. The coarse solution
at the boundary of each subintervals is ``corrected'' iteratively
by adding back the difference between the fine and the coarse solutions
computed in the previous iteration. The standard parareal scheme was
found to work quite well for dissipative problems. Loosely speaking,
the parareal iterations typically converge quite well to the desired
solution as long as it is stable.

In order to widen the range of applicability of parareal and increase
its stability, several methods, combining parareal with other approaches,
have been suggested. For example, Minion \cite{minion2011hybrid}
proposes a ``deferred spectral correction'' scheme. Farhat and Chandesris
\cite{Farhat2003} add a Newton-type iteration to reduce the jumps
between the fine and coarse solutions. Gander et. al. \cite{GanderPetcu2008}
analyze the Krylov subspace approach of \cite{Farhat2003} for linear
Ordinary Differential Equations (ODEs). The main idea is to use past
iterations to form a subspace that can improve the coarse integrator.
Although this method is applicable for low dimensional systems, it
would become insufficient for high dimensional problems due to difficulties
in orthogonalization in a large subspace. In \cite{GuettelGander2013},
the authors manipulate the principle of superposition in linear ODEs
to decouple inhomogeneous equations. Applying fast exponential integrators
that are highly efficient for the homogenized part, the method is
applicable to high-dimensional linear problems. Applications of parareal
methods to Hamiltonian dynamics have been analyzed in \cite{GanderHairer14}.
Additional approaches applying symplectic integrators with applications
to molecular dynamics include \cite{Bal-LNCSE08,Jimenez2011}. Dai
et. al. \cite{Dai2013} proposed a symmetrized parareal version coupled
with projections to the constant energy manifold.

In \cite{parareal-Legoll13}, a multiscale parareal scheme is proposed
for dynamical systems possessing fast dissipative dynamics. It is
found that the fast dissipative dynamics deteriorate the convergence
of the ``standard'' parareal scheme, and suitable projections of
the fast variables may improve the convergence property of such types
of systems. In \cite{AKT-SISC-parareal}, a parareal like multiscale
coupling schemes are proposed for highly oscillatory dynamical systems.
In that work, the coarse integrator in the standard parareal scheme
is replaced by a multiscale integrator that solves an effective system
derived from the given highly oscillatory one. The coarse solutions
are enhanced by an ``alignment'' process that uses the current fine
solutions. The idea of aligning the fine and coarse solutions and
propagating corrections on the coarse grid is also similar to the
correction method proposed in \cite{Farhat2003}. Both of these approaches
may be considered a special case of the general weighing scheme proposed
in this paper. Several works have addressed the applicability of parareal
methods to hyperbolic equations. It has been shown, that hyperbolic
problems pose stability issues for parareal iterations, especially
with large steps \cite{dai2013stable,ruprecht2012explicit,Farhat2003}.
Applications include structural models \cite{Farhat2003}, acoustic
advection problems \cite{ruprecht2017wave} and Partial Differential
Equations (PDEs) with highly oscillatory forcing \cite{parareal-HS-PDEs13}.

In this paper, we propose time-parallel algorithms motivated by the
parareal methods of \cite{parareal-LMT01}, due to its simple, derivative
free, iterative structure. The main goal is to enhance the stability
of the parareal iterations by taking a weighted linear combination
of the previous and current iterations. The new method is termed $\theta$-parareal
due to its formal resemblance to the known $\theta$-schemes for discretizing
time dependent partial differential equations. Particular emphasis
is given to oscillatory dynamical systems with essentially no dissipation.
Furthermore, we provide a systematic approach for coupling computations
involving different but in some sense ``nearby'' time dependent
problems. 

The paper is organized as follows. Section 2 presents our main approach
and analyzes some of its important properties. Section 3 presents
numerical examples. We conclude in section 4.

\section{$\theta$-parareal}

Consider ODEs of the form,
\[
\frac{d}{dt}u=f(u),\:u(0)=u_{0}.
\]
We are interested in a numerical approximation of the solution in
a bounded time segment $[0,T]$. Throughout the paper it is assumed
that solutions exist in $[0,T]$ and are sufficiently smooth.

Let $u_{n}^{(k)}\in\mathbb{C}^{d}$ denote the solution computed by
the parareal schemes at iteration $k$ and time $t_{n}=nH.$ Let $F_{H}$
and $C_{H}$ denote the numerical propagators used as the fine (high
accuracy but expansive) and coarse (low accuracy but cheap) integrators
up to time $H$. The parareal scheme proposed in \cite{parareal-LMT01}
is defined by the following simple iterations,

\begin{equation}
u_{n+1}^{(k+1)}=C_{H}u_{n}^{(k+1)}+\left(F_{H}u_{n}^{(k)}-C_{H}u_{n}^{(k)}\right),\,\,\,n,k=0,1,2,\dots,\label{eq:original-parareal}
\end{equation}
with the initial conditions 
\begin{equation}
u_{0}^{(k)}=u_{0},\,\,\,k=0,1,2,\dots.\label{eq:IC}
\end{equation}
 The first (zero) iteration is taken as
\[
u_{n+1}^{(0)}=C_{H}u_{n}^{(0)},\,\,\,n=0,1,2,\dots.
\]
We shall refer to (\ref{eq:original-parareal}) as the standard parareal
scheme. 

Consider a weighted version of the parareal update,
\[
u_{n+1}^{(k+1)}=\theta C_{H}u_{n}^{(k+1)}+(1-\theta)C_{H}u_{n}^{(k)}+\left(F_{H}u_{n}^{(k)}-C_{H}u_{n}^{(k)}\right),
\]
 leading to a more symmetric form,
\begin{equation}
u_{n+1}^{(k+1)}=\theta C_{H}u_{n}^{(k+1)}+\left(F_{H}u_{n}^{(k)}-\theta C_{H}u_{n}^{(k)}\right),\label{eq:proposed-parareal}
\end{equation}
 where $\theta$ are mappings from $\mathbb{C}^{d}$ to $\mathbb{C}^{d}$,
which may depend on $k$ and $n$. In the general case, the weights
$\theta$ will be denoted $\theta_{n}^{(k)}$, i.e.,

\begin{equation}
u_{n+1}^{(k+1)}=\theta_{n+1}^{(k+1)}C_{H}u_{n}^{(k+1)}+\left(F_{H}u_{n}^{(k)}-\theta_{n+1}^{(k+1)}C_{H}u_{n}^{(k)}\right).\label{eq:Ansatz-for-optimizing-theta}
\end{equation}
We start with the simplest case where $\theta$ is a real number,
then a complex number and finally linear operators. We note that the
method can still be parallelized as the initial condition for the
fine integrator only depends on the previous iteration. We view $\theta C_{H}$
as a new coarse integrator, and investigate in what (simple) ways
$\theta$ can enhance stability and accuracy of the original parareal
($\theta\equiv1$). 

We shall first show that the new schemes preserve the ``exact causal
property'' as the original parareal scheme, i.e., that, given in
$H$, the method will always converge to the fine solutions $(F_{H})^{n}u_{0}$
after $T/H$ iterations. Indeed, we notice that if $u_{n}^{(k)}=u_{n}^{(k+1)},$
then the recurrence relations in (\ref{eq:original-parareal}) or
(\ref{eq:proposed-parareal}) reduce to advancing from $t_{n}$ to
$t_{n}+H$ using the fine scale integrator, i.e., $u_{n}^{(k)}=(F_{H})^{n}u_{0}$
is a fixed point. More precisely, given the initial condition (\ref{eq:IC})
we see that 
\[
u_{1}^{(1)}=\theta_{1}^{(1)}C_{H}u_{0}+(F_{H}u_{0}-\theta_{1}^{(1)}C_{H}u_{0})=F_{H}u_{0},
\]
 and 
\[
u_{1}^{(k)}=\theta_{1}^{(k)}C_{H}u_{0}^{(k)}+(F_{H}u_{0}^{(k-1)}-\theta_{1}^{(k)}C_{H}u_{0}^{(k-1)})=F_{H}u_{0},k=1,2,\dots.
\]
By induction, 
\[
u_{j}^{(k+1)}=(F_{H})^{j}u_{0},\,\,\,j\le k,
\]
which implies that,
\begin{align*}
u_{k+1}^{(k+1)} & =\theta_{k+1}^{(k+1)}C_{H}u_{k}^{(k+1)}+(F_{H}u_{k}^{(k)}-\theta_{k+1}^{(k+1)}C_{H}u_{k}^{(k)})=(F_{H})^{k+1}u_{0},\,\,\,k=0,1,2,\dots.
\end{align*}
Hence, we have the following exact causality property:
\begin{thm}
\label{prop:exact-causal}Let $u_{n}^{(k)}$ solve (\ref{eq:original-parareal})
and (\ref{eq:IC}). Then,

\[
u_{n}^{(k)}=(F_{H})^{n}u_{0},\,\,\,\,\forall k\ge n.
\]
\end{thm}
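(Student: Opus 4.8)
The plan is to prove the statement by a single induction on the time index $n$, with the inductive predicate quantified over all iteration indices $k \ge n$ simultaneously. Concretely, I would let $P(n)$ be the assertion ``$u_n^{(k)} = (F_H)^n u_0$ for every $k \ge n$'' and establish $P(n)$ for all $n \ge 0$. The reason to bundle all $k \ge n$ into one predicate, rather than arguing about the diagonal $k = n$ in isolation, is that the recurrence (\ref{eq:original-parareal}) for $u_{n+1}^{(k+1)}$ reaches back to two iterates at time level $n$---namely $u_n^{(k+1)}$ and $u_n^{(k)}$---so I will need the fine-solution identity to hold for a whole range of iteration indices at the previous time level, not merely for one.

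For the base case $P(0)$, the initial condition (\ref{eq:IC}) gives $u_0^{(k)} = u_0 = (F_H)^0 u_0$ for every $k \ge 0$, so nothing further is required. For the inductive step, I would assume $P(n)$ and take an arbitrary $k \ge n+1$, writing $k = m+1$ with $m \ge n$. Substituting the recurrence and then the inductive hypothesis---which applies to $u_n^{(m+1)}$ since $m+1 \ge n$ and to $u_n^{(m)}$ since $m \ge n$---yields
\begin{equation*}
u_{n+1}^{(m+1)} = C_H (F_H)^n u_0 + \bigl( F_H (F_H)^n u_0 - C_H (F_H)^n u_0 \bigr) = (F_H)^{n+1} u_0,
\end{equation*}
which is exactly $P(n+1)$.

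The essential mechanism---and the only place where any real content enters---is the cancellation of the two coarse contributions $C_H (F_H)^n u_0$, which occurs precisely because the current and previous iterates at level $n$ have already collapsed to the same fine value under the inductive hypothesis. I do not anticipate a genuine obstacle; the only point requiring care is to state the inductive hypothesis strongly enough (uniformly in $k \ge n$) so that it is available for both iterate indices appearing in the recurrence. Finally, I would remark that the identical argument applies verbatim to the $\theta$-parareal updates (\ref{eq:proposed-parareal}) and (\ref{eq:Ansatz-for-optimizing-theta}): there the terms that cancel are $\theta\, C_H (F_H)^n u_0$, so the value of $\theta$ (scalar, complex, or operator-valued, and possibly depending on $n$ and $k$) plays no role, and the fixed point $(F_H)^n u_0$ is reached after $T/H$ iterations regardless.
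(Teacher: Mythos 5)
Your proposal is correct and follows essentially the same route as the paper: an induction on the time level whose hypothesis holds for all iteration indices at that level, with the key step being the cancellation of the two coarse-solver terms once both iterates at the previous level have collapsed to the fine solution. The paper even carries out this argument directly for the general $\theta_{n}^{(k)}$-weighted update (\ref{eq:Ansatz-for-optimizing-theta}), exactly as in your closing remark that the value of $\theta$ plays no role.
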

The surprising thing about this result is that it holds even if the
effective coarse integrator $\theta C_{H}$ is not consistent with
the ODE.

We now consider a simple case in which $\theta,$ $C_{H}$ and $F_{H}$
are linear operators, independent of $n$ and $k$. In order to study
the stability and convergence of the $\theta$-scheme, let $v_{n+1}^{(k)}$
denote the correction term $F_{H}u_{n}^{(k)}-\theta C_{H}u_{n}^{(k)}$.
Then $\theta$-parareal can be written as,

\begin{eqnarray*}
u_{n+1}^{k+1} & = & \theta C_{H}u_{n}^{(k+1)}+v_{n+1}^{(k)}\\
 & = & (\theta C_{H}\circ\theta C_{H})u_{n-1}^{(k+1)}+\theta C_{H}v_{n}^{(k)}+v_{n+1}^{(k)}\\
 & \vdots\\
 & = & (\theta C_{H})^{n+1}u_{0}^{(k+1)}+\sum_{j=1}^{n+1}\left(\prod_{i=j+1}^{n+1}\theta C_{H}\right)v_{j}^{(k)}.
\end{eqnarray*}
 Here we use the notation,
\[
(\theta C_{H})^{\ell}u=(\prod_{j=1}^{\ell}\theta C_{H})u=\underbrace{\theta C_{H}\circ\theta C_{H}\cdots\theta C_{H}}_{\ell\text{ times}}u.
\]
For a fixed $k$, the stability of the time marching is determined
by $(\theta C_{H})^{n}$. Having a stable coarse solver is crucial
in stabilizing the parareal solution because the correction is often
small up to the order of accuracy. However, it will be interesting
to consider examples in which, introducing the factor $\theta$ can
stabilize iterations. Suppose one runs the parareal scheme in a time
interval consisting of $N$ coarse sub-intervals. Define, $U^{(k)}:=(u_{0}^{(k)},u_{1}^{(k)},\cdots,u_{N}^{(k)})^{T},$
$I_{0}=\left(u_{0},0,\ldots,0\right)^{T}$, and
\[
A=\left(\begin{array}{ccccc}
I & 0 & \ldots & 0 & 0\\
-\theta C_{H} & I & \ldots & 0 & 0\\
0 & -\theta C_{H} & \ddots & 0 & 0\\
\vdots & \vdots & \ddots & \vdots & \vdots\\
0 & 0 & \ldots & -\theta C_{H} & I
\end{array}\right),B=\left(\begin{array}{ccccc}
0 & 0 & \ldots & 0 & 0\\
F_{H}-\theta C_{H} & 0 & \ldots & 0 & 0\\
0 & F_{H}-\theta C_{H} & \ddots & 0 & 0\\
\vdots & \vdots & \ddots & \vdots & \vdots\\
0 & 0 & \ldots & F-\theta C_{H} & 0
\end{array}\right).
\]
Then, the $\theta$-parareal iteration can be written in matrix form
as, 
\[
AU^{(k+1)}=BU^{(k)}+I_{0},
\]
with initial condition $U^{(0)}=(u_{0},C_{H}u_{0},\cdots,(C_{H})^{N}u_{0}).$
Thus, we obtain an explicit expression of $U^{(k)}$,
\[
U^{(k+1)}=A^{-1}BU^{(k)}+A^{-1}I_{0}.
\]
 We readily see that $U^{*}=(u_{0},F_{H}u_{0},\cdots,(F_{H})^{N}u_{0})$
is a fixed point, $U^{*}=A^{-1}BU^{*}+A^{-1}I_{0}$. Denoting the
signed error $E_{n}^{(k)}=U^{(k)}-U^{*},$ it is given by,
\[
E^{(k+1)}=A^{-1}BE^{(k)}=\left(\begin{array}{cccccc}
0\\
I & 0\\
\theta C_{H} & I & 0\\
(\theta C_{H})^{2} & \ddots & \ddots & \ddots\\
\vdots & \ddots & C_{H} & I & 0\\
(\theta C_{H})^{N-1} & \cdots & (\theta C_{H})^{2} & \theta C_{H} & I & 0
\end{array}\right)(F_{H}-\theta C_{H})E^{(k)},
\]
where $E^{(0)}=(0,(F_{H}-C_{H})u_{0},(F_{H}^{2}-C_{H}^{2})u_{0},\cdots,(F_{H}^{N}-C_{H}^{N})u_{0})^{T}$.
We find the following theorem.
\begin{thm}
Let\textbf{ $e_{n}^{(k)}:=u_{n}^{(k)}-(F_{H})^{n}u_{0},$} $n=0,1,\cdots,N,$
and $k\le n.$ The following inequality holds,
\begin{equation}
|e_{n}^{(k+1)}|\le\|F_{H}-\theta C_{H}\|_{\infty}\sum_{j=0}^{n-k-2}\|\theta C_{H}\|_{\infty}^{j}\,|e_{n}^{(k)}|.\label{eq:Thm22}
\end{equation}
\end{thm}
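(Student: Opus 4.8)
The plan is to read the inequality off the explicit error-propagation formula $E^{(k+1)}=A^{-1}BE^{(k)}$ already derived above, and then to exploit the exact causality of Theorem~\ref{prop:exact-causal} to truncate the resulting sum. First I would extract the $n$-th block row of the displayed matrix $A^{-1}B$. Since $A^{-1}$ is lower triangular with the block $(\theta C_{H})^{\,n-m}$ in position $(n,m)$ and $B$ carries $F_{H}-\theta C_{H}$ on its subdiagonal, the $n$-th block row gives the componentwise recursion
\[
e_{n}^{(k+1)}=\sum_{m=0}^{n-1}(\theta C_{H})^{\,n-1-m}\,(F_{H}-\theta C_{H})\,e_{m}^{(k)}.
\]

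Next I would invoke the exact causality property. Because $k\le n$ and, by Theorem~\ref{prop:exact-causal}, $e_{m}^{(k)}=u_{m}^{(k)}-(F_{H})^{m}u_{0}=0$ for every $m\le k$, all terms with index $m\le k$ drop out and the sum collapses to $m\in\{k+1,\dots,n-1\}$. This is precisely the step that produces the finite geometric range of exponents: as $m$ ranges over $\{k+1,\dots,n-1\}$, the power $n-1-m$ ranges over $\{0,1,\dots,n-k-2\}$, which already foreshadows the summation limits in the claimed bound.

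I would then pass to norms. Applying the triangle inequality, the submultiplicativity of the induced operator norm $\|\cdot\|_{\infty}$, factoring out the common factor $\|F_{H}-\theta C_{H}\|_{\infty}$, and reindexing by $j=n-1-m$, the truncated sum becomes
\[
|e_{n}^{(k+1)}|\le\|F_{H}-\theta C_{H}\|_{\infty}\sum_{j=0}^{n-k-2}\|\theta C_{H}\|_{\infty}^{\,j}\,\bigl|e_{\,n-1-j}^{(k)}\bigr|.
\]

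The main obstacle is the final comparison: the bound I obtain carries the earlier-index errors $|e_{\,n-1-j}^{(k)}|$, whereas the stated inequality carries the single factor $|e_{n}^{(k)}|$. To close the gap I would dominate each earlier-index error by $|e_{n}^{(k)}|$; since every index $n-1-j$ appearing in the sum satisfies $n-1-j\le n$, this is legitimate once one reads $|e_{n}^{(k)}|$ as the largest component error $\max_{0\le m\le n}|e_{m}^{(k)}|$ at iteration level $k$ (equivalently, under a monotonicity-in-$n$ convention for the error at fixed $k$). Pinning down exactly this indexing/domination point is where I would spend the most care; the remainder is routine bookkeeping on the geometric sum.
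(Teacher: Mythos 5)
Your proposal is correct and follows essentially the same route as the paper: the paper offers no separate proof beyond the derivation immediately preceding the statement, namely the matrix identity $E^{(k+1)}=A^{-1}BE^{(k)}$ whose $n$-th block row, after the exact-causality property (Theorem \ref{prop:exact-causal}) kills the components $e_{m}^{(k)}$ with $m\le k$, yields exactly your truncated geometric sum and the limits $j=0,\dots,n-k-2$. The index-domination issue you flag at the end is real, but it is a sloppiness in the paper's own statement rather than in your argument: the factor $|e_{n}^{(k)}|$ on the right-hand side must be read as the maximal error over indices at iteration $k$ (in the spirit of Theorem 4.5 of \cite{gander2007analysis}, which the paper cites), which is precisely the reading you adopt.
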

Note that this estimate can be derived from Theorem 4.5 in \cite{gander2007analysis}
by formally replacing $C_{H}$ with $\theta C_{H}$ and $F_{H}$ with
the exact solution operator that advances the solution by a time length
$H$. Nonetheless, the difference is important for the discussion
below and provides insight into how a parareal method would perform,
depending on the stability of $F_{H}$, $C_{H}$, $N$, and the accuracy
of $C_{H}$. We would like to see under what conditions the parareal
iterations decrease the errors. This translates to finding conditions
that render the amplification factor $Q_{n,k}:=|F_{H}-\theta C_{H}|\sum_{j=0}^{n-k-2}|\theta C_{H}|^{j}<1.$
We immediately see that a deciding factor is whether $\sum_{j=0}^{n-k-2}|\theta C_{H}|^{j}$
is uniformly bounded in $n$. 
\begin{thm}
(nonlinear variable coefficient case) Let\textbf{ $e_{n}^{(k)}:=u_{n}^{(k)}-(F_{H})^{n}u_{0},$}
$n=0,1,\cdots,N,$ and $k\le n.$ Then\textbf{ $u_{n}^{(k)}$ }is
given by,
\end{thm}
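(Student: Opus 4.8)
The plan is to convert the iteration into an exact, closed-form error recursion and then telescope it, mirroring the linear derivation that produced $E^{(k+1)}=A^{-1}BE^{(k)}$, but with the constant operators $\theta C_{H}$ and $F_{H}$ replaced by mean-value (averaged Jacobian) operators that linearize the nonlinear differences. First I would subtract the fine trajectory $\hat{u}_{n}:=(F_{H})^{n}u_{0}$ from the update (\ref{eq:Ansatz-for-optimizing-theta}). Using $\hat{u}_{n+1}=F_{H}\hat{u}_{n}$ and grouping the coarse and fine contributions gives
\[
e_{n+1}^{(k+1)}=\left[\theta_{n+1}^{(k+1)}C_{H}u_{n}^{(k+1)}-\theta_{n+1}^{(k+1)}C_{H}u_{n}^{(k)}\right]+\left[F_{H}u_{n}^{(k)}-F_{H}\hat{u}_{n}\right],
\]
so that the coarse term is a difference of $\theta C_{H}$ evaluated at the two consecutive iterates $u_{n}^{(k)},u_{n}^{(k+1)}$, while the fine term is a difference of $F_{H}$ evaluated at $u_{n}^{(k)}$ and the exact fine value $\hat{u}_{n}$.

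Second, assuming $F_{H}$, $C_{H}$ and the weights $\theta_{n}^{(k)}$ are $C^{1}$, I would invoke the fundamental theorem of calculus to replace each nonlinear difference by an averaged Jacobian acting on the corresponding increment. Defining
\[
\bar{\Theta}_{n+1}^{(k+1)}:=\int_{0}^{1}D\!\left(\theta_{n+1}^{(k+1)}C_{H}\right)\!\left(u_{n}^{(k)}+s\,(u_{n}^{(k+1)}-u_{n}^{(k)})\right)ds,\qquad\bar{F}_{n}^{(k)}:=\int_{0}^{1}DF_{H}\!\left(\hat{u}_{n}+s\,(u_{n}^{(k)}-\hat{u}_{n})\right)ds,
\]
one obtains $\theta_{n+1}^{(k+1)}C_{H}u_{n}^{(k+1)}-\theta_{n+1}^{(k+1)}C_{H}u_{n}^{(k)}=\bar{\Theta}_{n+1}^{(k+1)}(e_{n}^{(k+1)}-e_{n}^{(k)})$ and $F_{H}u_{n}^{(k)}-F_{H}\hat{u}_{n}=\bar{F}_{n}^{(k)}e_{n}^{(k)}$. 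The error then satisfies the linear, operator-coefficient recursion
\[
e_{n+1}^{(k+1)}=\bar{\Theta}_{n+1}^{(k+1)}e_{n}^{(k+1)}+\left(\bar{F}_{n}^{(k)}-\bar{\Theta}_{n+1}^{(k+1)}\right)e_{n}^{(k)},
\]
which is precisely the nonlinear analogue of the linear row relation $e_{n+1}^{(k+1)}=\theta C_{H}e_{n}^{(k+1)}+(F_{H}-\theta C_{H})e_{n}^{(k)}$ encoded by $A$ and $B$.

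Third, I would solve this recursion in $n$ for fixed outer index. The initial condition (\ref{eq:IC}) forces $e_{0}^{(k)}=u_{0}^{(k)}-u_{0}=0$ for every $k$, so there is no boundary term, and unrolling yields
\[
e_{n}^{(k+1)}=\sum_{m=1}^{n}\left(\prod_{i=m+1}^{n}\bar{\Theta}_{i}^{(k+1)}\right)\left(\bar{F}_{m-1}^{(k)}-\bar{\Theta}_{m}^{(k+1)}\right)e_{m-1}^{(k)},
\]
where the product denotes the ordered composition $\bar{\Theta}_{n}^{(k+1)}\bar{\Theta}_{n-1}^{(k+1)}\cdots\bar{\Theta}_{m+1}^{(k+1)}$ (empty, hence the identity, when $m=n$). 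Shifting $k+1\mapsto k$ and adding back $\hat{u}_{n}$ gives the claimed expression $u_{n}^{(k)}=(F_{H})^{n}u_{0}+e_{n}^{(k)}$.

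I expect the main obstacle to be the careful bookkeeping of the averaged Jacobians rather than the telescoping itself. The two averaged operators are evaluated between different pairs of states, namely $\bar{\Theta}$ between the consecutive iterates $u_{n}^{(k)}$ and $u_{n}^{(k+1)}$ and $\bar{F}$ between $u_{n}^{(k)}$ and the exact fine trajectory $\hat{u}_{n}$, and one must verify that the integral representations are legitimate, which needs the smoothness hypothesis together with reachability of the segments joining these states, and that the non-commuting operator products are composed in the correct order. As a consistency check I would also confirm that the construction collapses to the constant-coefficient result of the previous theorem when $D(\theta C_{H})$ and $DF_{H}$ are constant, since $\bar{\Theta}_{i}^{(k+1)}=\theta C_{H}$ and $\bar{F}_{m-1}^{(k)}=F_{H}$ then recover $(\theta C_{H})^{n-m}(F_{H}-\theta C_{H})$, validating the index ranges and the ordering convention in the product.
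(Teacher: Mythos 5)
Your derivation is internally sound as far as it goes, and it is genuinely different from what the paper does. The paper's proof (stated in a single line, ``similar to the linear case'') simply unrolls the update recursion for $u_{n}^{(k)}$ itself,
\[
u_{n+1}^{(k+1)}=\theta_{n+1}^{(k+1)}C_{H}u_{n}^{(k+1)}+v_{n+1}^{(k)},\qquad v_{n+1}^{(k)}:=F_{H}u_{n}^{(k)}-\theta_{n+1}^{(k+1)}C_{H}u_{n}^{(k)},
\]
exactly as in the computation preceding Theorem 2.2, so that the solution appears as the coarse-propagated initial iterate $\bigl[\prod_{j}\theta_{j}C_{H}\bigr]u_{0}^{(k+1)}$ plus the parareal corrections $v_{j}^{(k)}$ propagated by compositions of the $\theta_{i}C_{H}$. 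You instead subtract the fine trajectory, linearize the nonlinear differences by averaged Jacobians via the fundamental theorem of calculus, and unroll the resulting \emph{error} recursion. Those steps are correct; in particular your unrolling is legitimate precisely because $\bar{\Theta}$ and $\bar{F}$ are matrices, hence linear, and taking norms in your final identity, combined with the causality fact that $e_{j}^{(k)}=0$ for $j\le k$ (Theorem \ref{prop:exact-causal}), yields the inequality (\ref{eq:Thm23}) with the correct number $n-k-1$ of terms in the sum.

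There is, however, a genuine gap at your last step: ``adding back $\hat{u}_{n}$'' does \emph{not} give the claimed expression. The theorem asserts the representation
\[
u_{n+1}^{(k+1)}=\Bigl[\prod_{j=1}^{n+1}\theta_{j}^{(k)}C_{H}\Bigr]u_{0}^{(k+1)}+\sum_{j=1}^{n+1}\Bigl[\prod_{i=j+1}^{n+1}\theta_{i}^{(k)}C_{H}\Bigr]\bigl(F_{H}u_{j-1}^{(k)}-\theta_{j-1}^{(k)}C_{H}u_{j-1}^{(k)}\bigr),
\]
whose leading term is the coarse propagation of the initial iterate and whose summands are the actual computed corrections propagated by the operators $\theta_{i}C_{H}$, whereas your formula reads $(F_{H})^{n}u_{0}$ plus products of averaged Jacobians $\bar{\Theta}$ acting on the previous errors $e_{m-1}^{(k)}$. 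These are two different decompositions of the same quantity, built from different operators, and one cannot pass from yours to the paper's by relabeling; to obtain the stated formula one must unroll the $u$-recursion, not the $e$-recursion. It is worth noting that the paper's unrolling requires distributing $\theta_{i}C_{H}$ over sums, i.e.\ linearity (or affinity) of these maps, a point the paper glosses over in the ``nonlinear'' case, and your mean-value argument is in fact the rigorous route to the error bound. But as written, your proposal establishes the second assertion of the theorem and a substitute for the first, not the first assertion itself.
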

\[
u_{n+1}^{k+1}=\left[\prod_{j=1}^{n+1}\theta_{j}^{(k)}C_{H}\right]u_{0}^{k+1}+\sum_{j=1}^{n+1}\left[\prod_{i=j+1}^{n+1}\theta_{i}^{(k)}C_{H}\right](F_{H}u_{j-1}^{k}-\theta_{j-1}^{(k)}C_{H}u_{j-1}^{k}).
\]
\textit{Furthermore, the following inequality holds,}
\begin{equation}
|e_{n}^{(k+1)}|\le\|F_{H}-\theta C_{H}\|_{\infty}\sum_{j=0}^{n-k-2}\|\theta C_{H}\|_{\infty}^{j}\,|e_{n}^{(k)}|.\label{eq:Thm23}
\end{equation}

The proof is similar to the linear case. We see that the solution
is composed of coarse solution and a series of propagating correction. 

\subsection{Linear theory }

We consider a diagonalizable linear system of first order differential
equations,
\begin{equation}
\frac{d}{dt}U=AU,\,\,\,U(t)\in\mathbb{R}^{d},\label{eq:model-linear-system}
\end{equation}
 where the $d\times d$ complex matrix $A$ can be diagonalized, $A=P\Lambda P^{-1}$
and $\Lambda=\text{diag}(\lambda_{1},\cdots,\lambda_{d})$. By a change
of variable $U\mapsto P^{-1}U$, the system is decoupled into $d$
linear scalar equations on the complex plane. Consequently, the system
obtained by applying a typical linear numerical integrator can be
diagonalized in the same fashion. We consider using standard one-step
linear integrators as our choice of $F_{H}$ and $C_{H}$. Therefore,
$F_{H}u$ and $C_{H}u$ simply multiply $u$ by suitable complex numbers.
In addition, we will consider $\theta\in\mathbb{C}$, which obviously
commutes with $P$ and $P^{-1}.$ Overall, in this section we consider
initial value problems of the model scalar equation,
\begin{align}
u^{\prime} & =\lambda u,\,\,\,\lambda\in\mathbb{C},\label{eq:Model-linear-scalar-equation}\\
u(0) & =u_{0}.\nonumber 
\end{align}

\paragraph*{Dissipation helps.}

Here, by dissipation, we mean that all eigenvalues of $A$ have negative
real parts. We start by analyzing the standard parareal $(\theta=1)$.
For problems with dissipation, stable and consistent solvers will
naturally have an amplification factors that is strictly less than
one. Suppose that the coarse solver is strictly stable in the sense
that $|C_{H}|\le r_{0}<1,$ 
\[
1<\sum_{j=0}^{m-1}|C_{H}|^{j}=\frac{1-r_{0}^{m}}{1-r_{0}}<\frac{1}{1-r_{0}}.
\]
 Assume that $F_{H}$ and $C_{H}$ are consistent with the same equation,
that $F_{H}$ is $p$-th order method with step size $h\ll H$, and
$C_{H}$ is a $q$-th order method with step size $H$. For $u_{0}$
in a compact subset of the complex plane, 

\[
|F_{H}u_{0}-C_{H}u_{0}|\le K_{1}e^{Re[\lambda H]}h^{p}+K_{2}H^{q+1},
\]
 where $K_{1}$ and $K_{2}$ are two constants that depend on $\lambda$,
the solvers, $h$ and $H$\textbf{. }As a result, the stability of
a standard parareal ($\theta=1$) for $k<n\le N$ requires that,
\[
K_{1}e^{|\lambda|H}h^{p}+K_{2}H^{q+1}<1-r_{0}.
\]
If Re$\lambda<0$, then the terms on the Left Hand Side (LHS) are
bounded in time and the inequality holds for sufficiently small step
sizes. However, with oscillatory problems ($\lambda$ purely imaginary),
the exponential terms may prohibit a large ratio of $H/h$, which
limits the attractiveness of the parareal approach.

Next, we define the amplification factor, which is an upper bound
on the grows of the parareal error.\textbf{
\[
Q_{N^{\prime},k}=|F_{H}-\theta C_{H}|\sum_{j=0}^{N^{\prime}-k-2}|\theta C_{H}|^{j}.
\]
}Comparing with the the error bounds (\ref{eq:Thm23}), it is clear
that the parareal iteration will not be stable unless $Q_{N^{\prime},k}<1$.
We make several observations regarding this bound.
\begin{itemize}
\item The parareal iteration can produce solutions that converge globally
to the one computed by the fine solver, \emph{even if $F_{H}$ and
$C_{H}$ do not solve the same equation. }The iterations will converge
as long as (i) the coarse solver is strictly stable, i.e. $|C_{H}|\le r_{0}<1$,
and (ii) the gap between the fine solver and the coarse solver is
sufficiently small; i.e. $|F_{H}u_{0}-C_{H}u_{0}|<1-r_{0}.$ One simple
way to guaranty that is to choose a coarse solver that can at least
approximately propagate the causality of the given problem. 
\item The above estimates and observations apply when we formally replace
$C_{H}$ by $\theta C_{H}.$ 
\item If the problem is dissipative, then $|C_{H}|<1$, and parareal is
stable as long as $C_{H}$ (or $\theta C_{H}$) are sufficiently close.
However, in general, there exists a maximal value of coarse steps,
$N^{'}$, that depends on inverse powers of $H$ ($h<H$) such that
for $n<N'$ the errors $|e_{n}^{(k)}|$ decreases as $k$ increases,
while for $n\ge N'$, the error $|e_{n}^{(k)}|$ grows exponentially
as $k$ increases. For example, if \textbf{$\theta=1,$ $|C_{H}|=1,$
$k=1$, }then\textbf{ }we\textbf{ }need to pick an $N^{\prime}$ such
that i.e.,\textbf{
\[
Q_{N^{\prime},1}\le K_{3}H^{q+1}N^{\prime}<1.
\]
}Note that this estimate is note sharp. See, for example, Gander and
Hairer \cite{GanderHairer14} provide sharp bounds for the number
of allowed steps in solving Hamiltonian systems using symplectic integrators.
\end{itemize}

\paragraph*{Purely oscillatory problems are more challenging.}

We focus our discussion around the typical case when the coarse solver
is border-line stable; i.e. $|C_{H}|=1$ and $0<\delta\le|F_{H}|\le1$,
where $\delta$ is a lower bound of $|F_{H}|$. A significant implication
is that the parareal iterations will become unstable after several
coarse steps because $\sum_{j=0}^{N-k-2}|C_{H}|^{j}=N-k-1$. In this
case, we see that stability can be gained by multiplying $C_{H}$
by a factor $\theta$. In the extreme case of $\theta=0,$ the errors
in the parareal iterations trivially satisfy,\textbf{
\[
|e_{n}^{(k+1)}|\le|F_{H}|\,|e_{n}^{(k)}|.
\]
}The errors do not necessarily decrease (for $n>k$), unless \textbf{$|F_{H}|<1;$
}i.e. unless the fine solver is strictly linearly stable. For the
case $0<\theta<1$,
\[
1<\sum_{j=0}^{N-k-2}|\theta C_{H}|^{j}=\frac{1-\theta^{N-k-2}}{1-\theta}<\frac{1}{1-\theta}.
\]
The amplification factor is thus bounded by
\begin{align*}
Q_{n,k} & \le|F_{H}-\theta C_{H}|\sum_{j=0}^{n-k-2}|\theta C_{H}|^{j}\\
 & \le\min\left\{ |F_{H}|+\theta,\left(K_{1}h^{p}+K_{2}H^{q+1}+(1-\theta)\right)\right\} \frac{1-\theta^{N-k-2}}{1-\theta},
\end{align*}
The term $K_{1}h^{p}H+K_{2}H^{q+1}$ come from the local errors of
$F_{H}$ and $C_{H}$. When $K_{1}$ and $K_{2}$ are reasonably small,
i.e., the numerical schemes resolves the solution of the differential
equation with sufficiently high accuracy, $Q_{n,k}$ is minimized
for $\theta=1$. However, if the coarse solver does not resolve the
differential equation well, then $K_{2}$ can be very large. This
is the case if $C_{H}$ is some multiscale solver which solves a different
differential equation, or when there are Dirac$\delta$-like impulses
in the system. In such a case, it is reasonable to assume that 
\begin{equation}
|F_{H}|+\theta<K_{1}h^{p}+K_{2}H^{q+1}+(1-\theta).\label{eq:F+theta-smaller}
\end{equation}
In this case, using an appropriate value of $\theta$ may stabilize
the parareal iterations. For example, one may take a $\theta$ that
falls into the range, 
\[
(|F_{H}|+\theta)\frac{1}{1-\theta}<1,\,\,\,i.e.,\,\,\,0\le\theta<\frac{1-|F_{H}|}{2}.
\]
 Again, we see that it is necessary to have $|F_{H}|<1;$ i.e., if
$\theta$ is taken to be a real number then some dissipation in the
fine solver is necessary for stability . 

In the more challenging cases in which $|C_{H}|=1$ and $|F_{H}|=1$,
the parareal iterations needs to be stabilized in another way. We
first look at the following motivating example.
\begin{example}
We consider using the A-stable Trapezoidal rule as both the coarse
and fine integrators, $C_{H}=(1+\lambda H/2)/(1-\lambda H/2)$ and
$F_{H}=\left((1+\lambda h/2)/(1-\lambda H/2)\right)^{H/h}.$ Let $\lambda=-3$
and $H=1$, so that $|C_{H}|=1/5$ and 
\[
\sum_{j=0}^{m-1}|C_{H}|^{j}=\frac{5}{4}(1-\frac{1}{5^{N}})<\frac{5}{4}
\]
 is bounded independent of $N.$ It is clear that parareal iterations
easily converge in this case. Next, consider the oscillatory case
with $\lambda=3i$ and $H=1.$ Now, we have $|C_{H}|=|F_{H}|=1$ and
\[
\sum_{j=0}^{m-1}|C_{H}|^{j}=m.
\]
 We see that the standard parareal algorithm performs poorly compared
to the dissipative case. However, since $C_{H}=(-5+12i)/13$, \emph{it
is possible to multiply }$C_{H}$\emph{ by a complex constant $\theta=re^{i\phi_{H}}$
to minimize the difference $F_{H}u-\theta C_{H}$.} For example, taking
$h=H/20$, $F_{H}=(1591+240i)/1609$ and $\theta=F_{H}/C_{H}\approx-0.24-0.97i$
will drastically improve the convergence and stability of the scheme.
The interpretation is that multiplication by $\theta$ rotates the
coarse solution $C_{H}u$ to have a similar phase as $F_{H}u$. This
is a direct analogy to the ``phase alignment'' procedure proposed
in \cite{AKT-SISC-parareal}. 

In fact, following the old idea of of linear stability of a scheme
for ordinary differential equations, one can systematically look at
the stability property of a ``$\theta$-parareal'' scheme, for $\theta\in\mathbb{C}$, 
\end{example}
\begin{defn}
(Region of parareal-stability) For each $N>0$ and $\xi_{0}\in\mathbb{C}$,
define the set 
\begin{equation}
\mathcal{R}_{\{\lambda H=\xi_{0}\}}^{N}:=\{\theta\in\mathbb{C}:|F_{H}-\theta C_{H}|\sum_{j=0}^{N-2}|\theta C_{H}|^{j}\le1\}.\label{def:stability-region}
\end{equation}
We shall refer to $\mathcal{R}_{\{\lambda H=\xi_{0}\}}^{N}$ as the
region of parareal-stability for the model equation (\ref{eq:Model-linear-scalar-equation}).
Taking $\theta\in\mathcal{R}_{\{\lambda H=\xi_{0}\}}^{N}$ in (\ref{eq:proposed-parareal})
for solving (\ref{eq:Model-linear-scalar-equation}), guaranties that
the resulting errors $|e_{j}^{(k)}|$ will decrease to $0$ as $k$
increases for all $0\le j\le N$ . 
\end{defn}
Figures~\ref{fig:Region-of-parareal-stability} and \ref{fig:Region-of-parareal-stability-dissipative}
show a few examples of the regions of parareal-stability of different
choices $C_{H}$, $F_{H}$, and $H$. We see that for large $|\lambda H|$,
stabilization of the parareal scheme may require $\theta$ to have
non-zero imaginary part; i.e., the coarse solutions need to be \emph{rotated. }

We have seen that, particularly for problems involving oscillations,
the deciding factor for stability and performance of parareal iterations
lies in how well $C_{H}$ approximates $F_{H}$. For oscillatory problems
and ``marginally stable'' integrators, for example in system that
preserve certain energy or invariance, it is necessary to bridge the
gap between the coarse and fine integrators by suitable rotations.
Figure~\ref{fig:Convergence-plots} shows the amplification factors
$Q_{n,0}$ of the standard parareal method involving Forward and backward
Euler schemes and the corresponding cases for the $\theta$-parareal
scheme, with $\theta$ in $\mathcal{R}_{\{\lambda H=0.1i\}}^{N}$.
The results demonstrate that when the number of step is large ($N=10^{4}$
in simulations), stability becomes a critical issue.

Summarizing this example, it is our objective to choose an optimized
choice of $\theta$ to achieve $|F_{H}-\theta C_{H}|\ll|F_{H}-C_{H}|$
while keeping $\sum_{j=0}^{N}|\theta C_{H}|^{j}$ to a moderate size
for some $N$. $\theta C_{H}$ can be viewed as an improved coarse
solver. In the next section, we present two strategies for achieving
this objective. 

\begin{figure}
\begin{centering}
\includegraphics{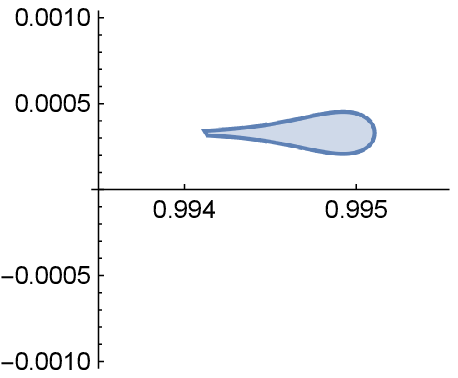}\includegraphics{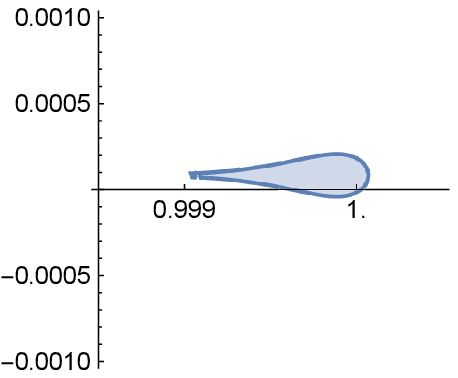}\includegraphics{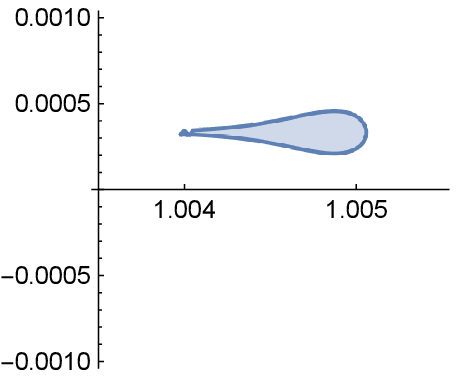}
\par\end{centering}
\caption{Oscillatory example: The region of parareal-stability for (left) forward
Euler, (center) trapezoidal rule and (right) backward Euler as both
the fine and coarse integrators. Parameters are $\lambda H=0.1i$
and $N=10^{4}$. Note that $\theta=1$ is not always included in the
stability region. \label{fig:Region-of-parareal-stability}}
\end{figure}
\begin{figure}
\begin{centering}
\includegraphics{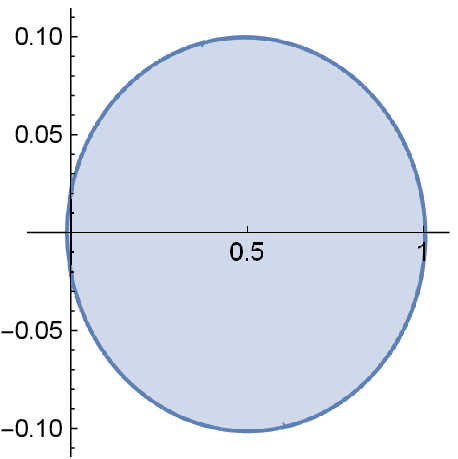}\includegraphics{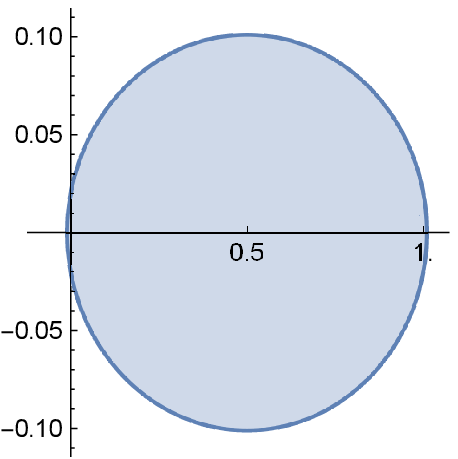}\includegraphics{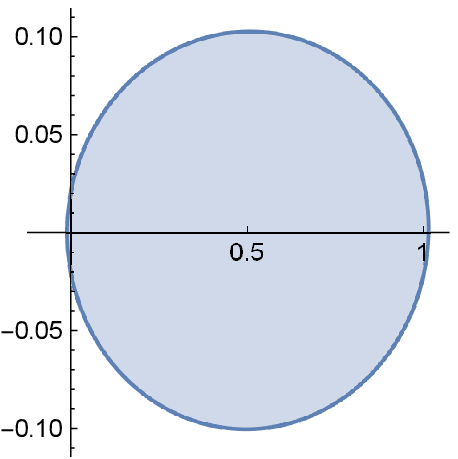}
\par\end{centering}
\caption{Dissipative example: The region of parareal-stability for (left) forward
Euler, (center) trapezoidal rule and (right) backward Euler as both
the fine and coarse integrators. Parameters are $\lambda H=-0.02+0.1i$
and $N=10^{4}$. Note that $\theta=1$ is included in the stability
region, as expected for dissipative systems. \label{fig:Region-of-parareal-stability-dissipative}}
\end{figure}
\begin{figure}
\begin{centering}
\includegraphics{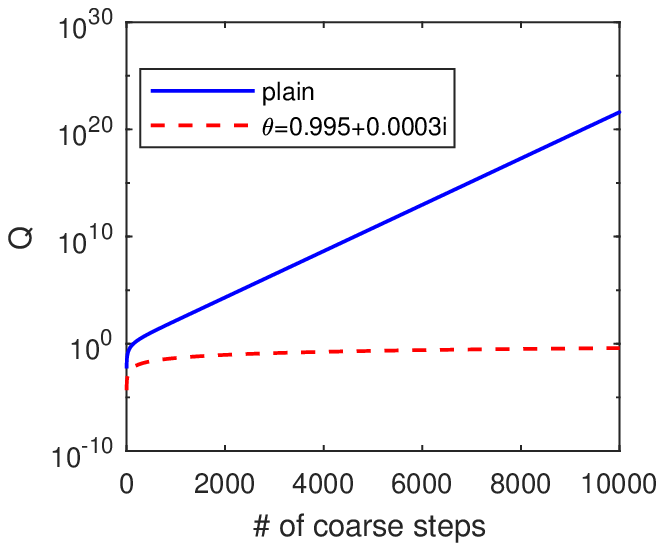}\includegraphics{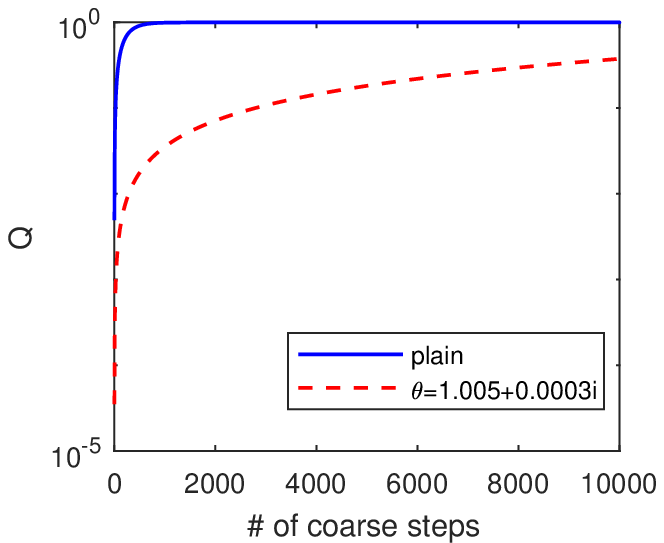}
\par\end{centering}
\begin{centering}
\includegraphics{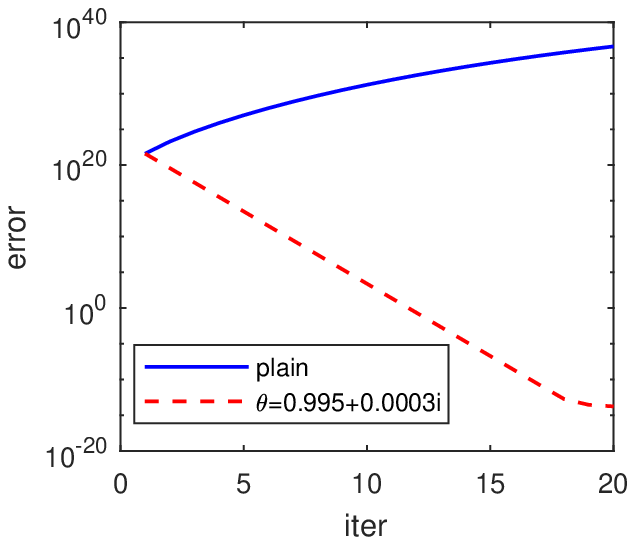}\includegraphics{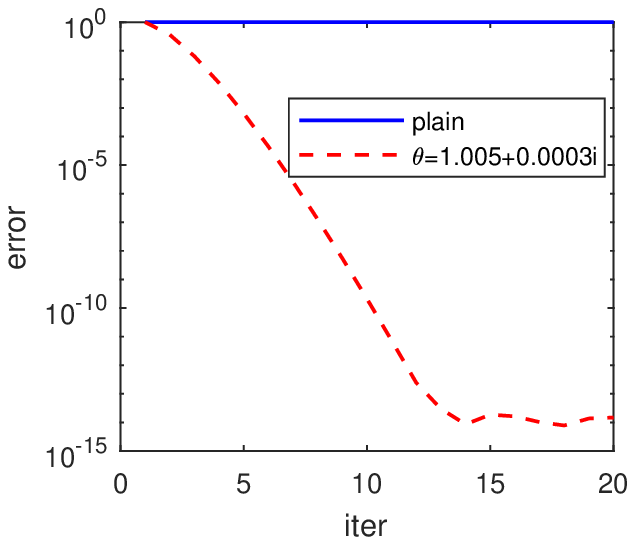}
\par\end{centering}
\caption{\label{fig:Convergence-plots}Convergence of the parareal iterations
for a scalar linear ODE. Top row: The amplification factor $Q_{m,0}=|F_{H}-\theta C_{H}|\sum_{j=0}^{m-2}|\theta C_{H}|^{j}$.
Bottom row shows the computed errors with parameters $\lambda H=0.1i$
and $N=10^{4}$. $F_{H}$ is taken to be the exact solution operator.
The left column shows the results from $C_{H}$ being forward Euler
scheme, and the right column the implicit Euler.}
\end{figure}

\subsection{``Sequentializing'' parareal}

Let $a=|F_{H}-\theta C_{H}|$ and, for oscillatory problems, $b=\sum|\theta C_{H}|^{j}=N-k-2.$
Assuming a coarse solver with step size $H$ and order $q$, $a\sim H^{q+1}$,
which implies, following the stability analysis, that $N<H^{-(q+1)}$.
Therefore, we should not run parareal for more than $N\sim H^{-(q+1)}$
time steps.

In order to overcome this limitation, we propose the divide the the
time interval of interest $[0,T]$ to $s$ subintervals of equal length
and preform parareal sequentially. A similar idea has been suggested
in \cite{GanderHairer14}. As we describe below, this does not significantly
reduce the computational cost for moderate values of $s$.

\paragraph*{Computational cost}

Assume that the coarse and fine integrators apply numerical schemes
using step sizes $H$ and $h$, respectively. Let $n_{CPU}$ denote
the number of CPUs available and assume that $n_{CPU}<T/H$ . Then,
the computational cost of $K$ standard parareal iterations consists
of coarse solver, fine solver operations and extra cost for data transfer
between processors 
\[
C_{p}=K\left(\dfrac{T}{H}+\dfrac{T}{n_{CPU}h}+c_{com}\right)
\]
where $c_{com}$ is the cost of communication. Sequential parareal
processes a shorter time interval at once, $t=T/s$. Then, assuming
that $n_{CPU}<T/(Hs)$, the computational cost of sequential parareal
is, 
\[
C_{sp}=sK\left(\dfrac{T}{Hs}+\dfrac{T}{n_{CPU}hs}+c_{com}\right)\backsimeq C_{p}+sKc_{com}.
\]
Therefore, if the number of processors is not very large (compared
to the maximal theoretical gain using parareal, $T/H$ ) and the communication
in a parallel cluster is efficient, then the computational cost of
sequential parareal is comparable to standard parareal.

\textbf{}%

\subsection{Optimized choices of $\theta$ }

Formula (\ref{eq:proposed-parareal}) and the error estimate (\ref{eq:Thm23})
suggest that $\theta$ needs to bring the coarse scheme to be closer
to the fine one. Indeed, if we can find, without a significant computational
overhead, an operator $\theta$ such that $\theta C_{H}u\equiv F_{H}u$
for all $u\in\mathbb{R}^{d}$, then the parareal scheme is reduced
to simply 
\begin{equation}
u_{n+1}^{(k+1)}=\theta C_{H}u_{n}^{(k+1)}\equiv F_{H}u_{n}^{(k+1)}.\label{eq:ideal-update}
\end{equation}
This means that we shall enjoy the accuracy of the fine integrator. 

In practice, it is more reasonable to \emph{approximate $F_{H}u_{n}^{(k+1)}$
point-wise, using the data gathered from the evaluations of $F_{H}$
at previously computed points $\{u_{n}^{(k)}\}$.} Accordingly, we
shall use the notation $\theta_{n}^{(k)}C_{H}$ to be the approximation
defined near $u_{n}^{(k+1)}$. Of course, the idea of ``recycling''
the computed data to improve the coarse solver is not new. Some existing
parareal algorithms that apply similar approaches have been suggested
and analyzed in \cite{Farhat2003} and \cite{ruprecht2012explicit}.
The point of view of this paper is different as our focus is on increasing
the stability of iterations, even at the cost of high-order accuracy.

Given a choice of coarse and fine integrators, we assume the $\theta_{n}^{(k)}$
is an operator that acts on all previous coarse points \emph{$\{u_{n}^{(j)}\}_{j=0}^{k}$
}into the states space. For simplicity, we will restrict the discussion
to affine linear maps, denoted $\Theta:\mathbb{R}^{d\times(k+1)}\to\mathbb{R}^{d}$
. We consider two approaches in construction of $\theta_{n}^{(k)}$.
The first is a variational approach which directly attempt to minimize
the mismatch between the fine and coarse integrators over a suitable
set of points $\Omega_{n}^{(k)}$ that includes \emph{$\{u_{n}^{(j)}\}_{j=0}^{k}$
}and possibly additional points in its vicinity,

\begin{equation}
\theta_{n}^{(k)}=\arg_{\theta\in\Theta}\text{\ensuremath{\min}}{}_{u\in\Omega_{n}^{(k)}}||F_{H}u-\theta C_{H}u||,\label{eq:optimized-theta}
\end{equation}
where $||\cdot||$ can be the operator or other suitable norm. In
particular, if $C_{H}$ is invertible, then we may simply take 
\[
\theta_{n}^{(k)}\equiv F_{H}C_{H}^{-1}.
\]
We remark that while such approaches may be doable for smaller systems,
it is not practical for large systems unless some low-rank approximation
of $F_{H}$ can be computed efficiently in $\Omega_{n}^{(k)}$. 

\subsection*{Interpolation}

Assuming that $\theta$ is affine and that $\Omega_{n}^{(k)}$ is
a finite set, then the minimization (\ref{eq:optimized-theta}) can
be obtained using linear interpolation. Here, we consider the case
where, $k\ge d$, the set $\Omega_{n}^{(k)}$ includes the last $d+1$
parareal approximations for the $n$'th coarse step, $\Omega_{n}^{(k)}=\left\{ u_{n}^{(k-d)},\dots,u_{n}^{(k)}\right\} $.
Hence, the linear operator $\theta:\mathbb{R}^{d\times(k+1)}\to\mathbb{R}^{d}$
satisfies
\begin{equation}
\theta_{n}^{(k)}C_{H}(u_{n}^{(k-j)})=F_{H}u_{n}^{(k-j)},\,\,\,j=0\dots d,\label{eq:interpolation-constraints}
\end{equation}
 In the next section, we shall present some numerical simulations
using the following construction 
\begin{equation}
\theta_{n}^{(k)}C_{H}(w):=C_{H}w+I_{n}^{(k)}(w;\{u_{n}^{(k-j)}\}_{j=0}^{d}),\label{eq:interp-theta-formula}
\end{equation}
 where $I_{n}^{(k)}(w;\Omega_{n}^{(k)})$ are affine approximations
of the function $\kappa(u):\mathbb{R}^{d}\to\mathbb{R}^{d},$ 
\[
\kappa(u):=\left[F_{H}-C_{H}\right]u,
\]
 which linearly interpolate the points in a set $\Omega_{n}^{(k)}$;
i.e. 
\begin{equation}
I_{n}^{(k)}(u_{n}^{(k-j)};\{u_{n}^{(k-j)}\}_{j=0}^{d})=\kappa(u_{n}^{(k-j)})=\left[F_{H}-C_{H}\right]u_{n}^{(k-j)},\,\,\,j=0\dots d.\label{eq:interp-criteria}
\end{equation}
For brevity, we shall write below $I_{n}^{(k)}w=I_{n}^{(k)}(w;\{u_{n}^{(k-j)}\}_{j=0}^{d})$.
Using, (\ref{eq:interp-theta-formula}) and (\ref{eq:interp-criteria}),
\[
\theta_{n}^{(k)}C_{H}u_{n}^{(k)}=C_{H}u_{n}^{(k)}+I_{n}^{(k)}u_{n}^{(k)}=C_{H}u_{n}^{(k)}+\left[F_{H}-C_{H}\right]u_{n}^{(k)}=F_{H}u_{n}^{(k)}.
\]
Substituting into the$\theta$-parareal update,

\begin{equation}
u_{n+1}^{(k+1)}=\theta_{n}^{(k)}C_{H}u_{n}^{(k+1)}+F_{H}u_{n}^{(k)}-\theta_{n}^{(k)}C_{H}u_{n}^{(k)}=\theta_{n}^{(k)}C_{H}u_{n}^{(k+1)}=\left[C_{H}+I_{n}^{(k)}\right]u_{n}^{(k+1)}.\label{eq:compact-interp-theta-scheme}
\end{equation}

\subsubsection{Error estimates}

Let us consider the interpolative approach from the view point of
a linear approximation of the function $\kappa(u).$ An ``ideal''
parareal update (\ref{eq:ideal-update}), in the sense that $u_{n}^{(k)}=F_{H}^{n}u_{0}$,
could be written as 
\begin{equation}
u_{n+1}^{(k+1)}=C_{H}u_{n}^{(k+1)}+\kappa(u_{n}^{(k+1)}).\label{eq:ideal_parareal_kappa}
\end{equation}
However, $\kappa(u_{n}^{(k+1)})$ are not known and need to be approximated.
In this formulation, standard parareal amounts to approximating $\kappa(u_{n}^{(k+1)})$
by $\kappa(u_{n}^{(k)})$. We may make use of Taylor series to design
higher order approximation of $\kappa$ near the computed values.
Expanding $\kappa$ around $u_{n}^{(k)},$we have

\begin{equation}
\kappa(w)=\kappa(u_{n}^{(k)})+D_{\kappa}(u_{n}^{(k)})(w-u_{n}^{(k)})+R_{2}(w-u_{n}^{(k)}),\label{eq:kappa_expansion}
\end{equation}
where $R_{2}$ is the second order (in the sense of small distance
to $u_{n}^{(k)})$ remainder term. In this way, the ``ideal'' parareal
update then formally becomes,

\begin{equation}
u_{n+1}^{(k+1)}=C_{H}u_{n}^{(k+1)}+\kappa(u_{n}^{(k)})+D_{\kappa}(u_{n}^{(k)})(u_{n}^{(k+1)}-u_{n}^{(k)})+R_{2}(u_{n}^{(k+1)}-u_{n}^{(k)}).\label{eq:interp_parareal_update}
\end{equation}
We may define a parareal update by truncating the higher order terms,
$R_{2}$ in the expansion of $\kappa$ around $u_{n}^{(k)}$. The
following Lemma shows that $\kappa(u_{n}^{(k)})+D_{\kappa}(u_{n}^{(k)})(w-u_{n}^{(k)})$
can be approximated to second order by linear interpolation. 
\begin{lem}
Let $x\in\mathbb{R}^{d}$ and let $\Omega=\left\{ x_{0},\dots,x_{d}\right\} \subset\mathbb{R}^{d}$
be a set of $d+1$ interpolation points in $B_{\epsilon}(x)=\left\{ w:\left|w-x\right|<\epsilon\right\} $
for some $\epsilon>0$. Let 
\[
V=\left[\begin{array}{cccc}
x_{0} & \;x_{1} & \;\cdots & \;x_{d}\\
1 & 1 & \;\cdots & \;1
\end{array}\right],
\]
a $(d+1)\times(d+1)$ matrix, and assume that $\det V\neq0$. Let
$\kappa:\mathbb{R}^{d}\to\mathbb{R}^{d}$ denote a $C^{2}$ function
in the $\epsilon$-ball. Let $l:\mathbb{R}^{d}\to\mathbb{R}^{d}$
denote the unique affine function, $l(w)=l_{0}+Aw$, such that $\kappa(w)=l(w)$
for all $w=x_{0}\dots x_{d}$. Then,
\[
\left|\kappa(x)-l(x)\right|\le C\frac{\epsilon^{2}}{\sigma_{\min}},
\]
for some constant $C>0$, where $\sigma_{\min}$ is the smallest singular
value of $V$.
\end{lem}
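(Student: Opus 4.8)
The plan is to represent the evaluation point $x$ in barycentric coordinates with respect to the simplex whose vertices are $x_0,\dots,x_d$, and then to observe that the dominant (first-order) part of the interpolation error cancels identically, leaving only a second-order remainder that can be controlled by $\epsilon^2$ and the conditioning of $V$.

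First, since $\det V\neq0$, the matrix $V$ is invertible, so there is a unique vector $\mu=(\mu_0,\dots,\mu_d)^{T}$ solving $V\mu=\bigl(x^{T},1\bigr)^{T}$, i.e. $\mu=V^{-1}\bigl(x^{T},1\bigr)^{T}$. Reading off the last row gives $\sum_{i=0}^{d}\mu_i=1$ and the remaining rows give $\sum_{i=0}^{d}\mu_i x_i=x$; these are precisely the barycentric coordinates of $x$. Because $l(w)=l_0+Aw$ is affine and $\sum_i\mu_i=1$, $\sum_i\mu_i x_i=x$, one has $l(x)=\sum_i\mu_i\,l(x_i)$, and since $l$ interpolates $\kappa$ at each $x_i$ this equals $\sum_i\mu_i\,\kappa(x_i)$. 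Using $\sum_i\mu_i=1$ once more, I would write the error in the symmetric form $\kappa(x)-l(x)=\sum_{i=0}^{d}\mu_i\bigl(\kappa(x)-\kappa(x_i)\bigr)$.

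Next I would Taylor expand each $\kappa(x_i)$ about $x$. Since $\kappa\in C^{2}$ on $B_\epsilon(x)$, $\kappa(x_i)=\kappa(x)+D\kappa(x)(x_i-x)+r_i$ with a quadratic remainder satisfying $|r_i|\le\tfrac12 M\,|x_i-x|^{2}\le\tfrac12 M\epsilon^{2}$, where $M$ bounds the second derivatives of $\kappa$ on the ball. Substituting, the first-order contribution is $-\sum_i\mu_i\,D\kappa(x)(x_i-x)=-D\kappa(x)\sum_i\mu_i(x_i-x)$, which vanishes because $\sum_i\mu_i(x_i-x)=x-x=0$. This cancellation is the crux of the argument and is the reason the interpolant is second-order accurate. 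Hence $\kappa(x)-l(x)=-\sum_i\mu_i r_i$, giving $|\kappa(x)-l(x)|\le\tfrac12 M\epsilon^{2}\sum_i|\mu_i|=\tfrac12 M\epsilon^{2}\|\mu\|_1$.

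Finally I would bound $\|\mu\|_1$ in terms of $\sigma_{\min}$. From $\mu=V^{-1}\bigl(x^{T},1\bigr)^{T}$ and $\|V^{-1}\|_2=1/\sigma_{\min}$, together with $\|\mu\|_1\le\sqrt{d+1}\,\|\mu\|_2$, one gets $\|\mu\|_1\le\sqrt{d+1}\,\sqrt{|x|^{2}+1}\,/\,\sigma_{\min}$. Combining yields $|\kappa(x)-l(x)|\le C\,\epsilon^{2}/\sigma_{\min}$ with $C=\tfrac12 M\sqrt{d+1}\sqrt{|x|^{2}+1}$. I expect the only real obstacle to be bookkeeping in the vector-valued Taylor remainder (applying the scalar estimate componentwise, or using the integral form of the remainder with the operator norm of $D^{2}\kappa$) and making explicit that the constant $C$ may depend on $x$, $d$, and the $C^{2}$-bound of $\kappa$, but not on the configuration of the points beyond $\sigma_{\min}$; the algebraic cancellation of the linear term is straightforward once the barycentric representation is in place.
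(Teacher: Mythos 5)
Your proof is correct, and at its core it runs on the same mechanism as the paper's: a first-order Taylor expansion of $\kappa$, exactness of the affine interpolant on the linear part, and the fact that inverting $V$ costs at most a factor $1/\sigma_{\min}$. The packaging, however, is genuinely different. The paper constructs the interpolation operator explicitly as $L=FV^{-1}$, Taylor-expands the data matrix $F=[\kappa(x_0)\,\cdots\,\kappa(x_d)]=[\nabla\kappa(0)\;\kappa(0)]V+O(\epsilon^2)$, and concludes $L=[\nabla\kappa(0)\;\kappa(0)]+O(\epsilon^2/\sigma_{\min})$ --- a statement about the \emph{coefficients} of $l$ --- and only then evaluates at the base point, after a ``without loss of generality $x=0$'' translation. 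You instead write $l(x)=\sum_i\mu_i\kappa(x_i)$ with barycentric weights $\mu=V^{-1}(x^{T},1)^{T}$ and let the linear term cancel through $\sum_i\mu_i(x_i-x)=0$, so the error collapses to $-\sum_i\mu_i r_i$, bounded by $\|\mu\|_1\max_i|r_i|$. These are literally the same error term ($RV^{-1}e$ in the paper's notation versus $R\mu$ in yours), just bounded slightly differently. What your version buys: it avoids the paper's normalization $x=0$, which is less innocent than it looks, since translating the points changes $V$ and hence $\sigma_{\min}$ (a subtlety the paper glosses over), and it makes the constant explicit, $C=\tfrac12 M\sqrt{d+1}\sqrt{|x|^{2}+1}$, honestly recording its dependence on $|x|$ (harmless in the application, where $x=u_n^{(k+1)}$ stays bounded). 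What the paper's version buys: the operator-level conclusion $L\approx[\nabla\kappa\;\kappa]$ is slightly stronger than a pointwise bound, since it controls the interpolant uniformly in a neighborhood of $x$ rather than only at $x$ itself.
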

\begin{proof}
Without loss of generality, we take $x=0$. Denote
\[
l(w)=L\left[\begin{array}{c}
w\\
1
\end{array}\right],
\]
 where $L$ is a $d\times(d+1)$ matrix that satisfies 
\[
LV=F=\left[\kappa(x_{0})\;\cdots\;\kappa(x_{d})\right].
\]
Since $V$ is invertible, $L:=FV^{-1}$ is uniquely defined. Also,
expanding $\kappa$ in a Taylor series around $0$ and evaluating
at $x_{j},$
\[
\kappa(x_{j})=\kappa(0)+\nabla\kappa(0)x_{j}+O(\epsilon^{2}),
\]
or, arranging these $d+1$ columns in a matrix,
\[
F=\kappa(0)\left[1\;\cdots\;1\right]+\nabla\kappa(0)\left[x_{0}\;\cdots\;x_{d}\right]+O(\epsilon^{2})=\left[\nabla\kappa(0)\;\kappa(0)\right]V+O(\epsilon^{2}).
\]
Multiplying by $V^{-1}$ we conclude that $L=\left[\nabla\kappa(0)\;\kappa(0)\right]+O(\epsilon^{2}/\sigma_{\min})$.
As a result,
\[
l(0)=\left[\nabla\kappa(0)\;\kappa(0)\right]\left[\begin{array}{c}
0\\
\vdots\\
0\\
1
\end{array}\right]+O(\epsilon^{2}/\sigma_{\min})=\kappa(0)+O(\epsilon^{2}/\sigma_{\min}),
\]
which proves the Lemma.
\end{proof}
Returning to interpolative $\theta$-parareal, we take $x_{j}=u_{n}^{(k-j)}$.
Assume that, at time step $n$, parareal reached accuracy $\epsilon$
in the last $d+1$ iterations, i.e.,
\[
\max_{j=0\dots d}|u_{n}^{(k-j)}-F_{H}^{n}u_{0}|<\epsilon.
\]
Then, using Lemma 2.6 with $x=u_{n}^{(k+1)}$, the linear interpolation
$I_{n}^{(k)}(w;\left\{ u_{n}^{(k+1)}\right\} _{j=0}^{d})$ satisfies,

\[
I_{n}^{(k)}u_{n}^{(k+1)}=\kappa(u_{n}^{(k+1)})+O(\epsilon^{2}/\sigma_{\min}).
\]
Using (\ref{eq:ideal_parareal_kappa}) implies that
\begin{equation}
|u_{n+1}^{(k+1)}-F_{H}^{n+1}u_{0}|=O(\epsilon^{2}/\sigma_{\min}).\label{eq:interpolative-parareal-error-estimate}
\end{equation}
We conclude that, after every $d+1$ iterations of the interpolative
$\theta$-parareal, the error $\epsilon$ is reduced to $O(\epsilon^{2}/\sigma_{\min})$.
We keep $\sigma_{\text{min }}$, the smallest singular value of $V$,
in the above formula to reflect a potential problem when $V$ is not
well-conditioned. Viewing from another angle, it also suggest an opportunity
in considering interpolation in lower dimensional subspaces. 

\subsubsection{Interpolation in lower dimensional subspaces}

Starting from a given initial condition, structure preserving schemes
will produce numerical solutions which lie on certain invariant manifolds,
immersed in the higher dimensional phase space. There are two possible
simple approaches that could be used to make the interpolation well-defined:
\begin{enumerate}
\item Add a suitable number of additional data points near $u_{n}^{(k)}$
to allow a unique linear interpolation.
\item Interpolate $\kappa(u)=F_{H}u-C_{H}u$ in a lower dimensional subspace.
\end{enumerate}
In this paper, we discuss the second strategy. For convenience, we
shall present the algorithm is a slightly different setup. Denote
\[
W_{n}^{(k)}:=\left[\begin{array}{ccc}
u_{n}^{(k)} & \;\cdots & \;u_{n}^{(k-d)}\\
1 & \cdots & 1
\end{array}\right].
\]
The main idea is to work with the singular value decomposition of
$W_{n}^{(k)}.$

Assume that the singular values of the matrix $W_{n}^{(k)}$ satisfy
$\sigma_{j}\le\epsilon,\,\,\,\ell<j\le d,$ where $\epsilon$ is a
chosen tolerance. Let $\tilde{U}\tilde{\Sigma}\tilde{V}^{T}$ be the
truncated singular value decomposition corresponding to $\sigma_{1}\dots\sigma_{\ell}.$
Then, interpolation can be performed in the subspace spanned by the
first $\ell$ singular vectors,

\[
\tilde{I}_{n,\Delta}^{(k)}:=\left[\begin{array}{ccc}
\kappa(u_{n}^{(k)}) & \;\cdots & \;\kappa(u_{n}^{(k-j)})\end{array}\right]\tilde{V}\tilde{\Sigma}^{-1}\tilde{U}^{T}.
\]
The interpolative parareal iteration (\ref{eq:compact-interp-theta-scheme})
becomes
\[
u_{n+1}^{(k+1)}:=C_{H}u_{n}^{(k+1)}+\tilde{I}_{n,\Delta}^{(k)}\left[\begin{array}{c}
u_{n}^{(k+1)}\\
1
\end{array}\right].
\]
 To further improve stability, we may consider abandoning the approximation
of $\kappa(u)$ if the interpolated values are too far away from the
previously computed ones. For example, in the computations presented
in the next section, we use the criterion
\[
\|\tilde{I}_{n,\Delta}^{(k)}(u_{n}^{(k+1)})\|<2\max_{j}\|\kappa(u_{n}^{(j)})\|.
\]
If the above criterion is not met by the computation, the algorithm
switches back to the plain parareal which uses a lower order approximation
of $\kappa.$ We summarize our proposed method in Algorithm~\ref{alg:Interp_based_parareal}.

\begin{algorithm}
\begin{enumerate}
\item $k=0$: Compute the coarse approximation $u_{n}^{(0)}=C_{H}u_{n-1}^{(0)}$.
Set $\left[\begin{array}{c}
W_{n}^{(0)}\end{array}\right]=\left[\begin{array}{cccc}
u_{n}^{(0)} & \;u_{n}^{(0)} & \;\cdots & \;u_{n}^{(0)}\end{array}\right].$
\item $k=1$: Compute $\kappa(u_{n}^{(0)})=F_{H}u_{n-1}^{(0)}-C_{H}u_{n-1}^{(0)}$
in parallel. 

$K_{n}^{(0)}=\left[\begin{array}{cccc}
\kappa(u_{n}^{(0)}) & \;\kappa(u_{n}^{(0)}) & \;\cdots & \;\kappa(u_{n}^{(0)})\end{array}\right].$

$u_{n}^{(1)}=C_{H}u_{n-1}^{(1)}+\kappa(u_{n}^{(0)}).$

$\left[\begin{array}{c}
W_{n}^{(1)}\end{array}\right]=\left[\begin{array}{cccc}
u_{n}^{(1)} & \;u_{n}^{(0)} & \;\cdots & \;u_{n}^{(0)}\end{array}\right].$
\item For $k\geq2$, compute $\kappa(u_{n}^{(k-1)})$ in parallel. 

$K_{n}^{(k-1)}=\left[\begin{array}{ccc}
\kappa(u_{n}^{(k-1)}) & | & K_{n}^{(k-2)}\left[:,2:d\right]\end{array}\right].$
\begin{enumerate}
\item Compute SVD of $\left[\begin{array}{c}
W_{n}^{(k-1)}\end{array}\right]=U\Sigma V^{T}$. 
\item Select $m$ largest singular values such that $\nicefrac{\sigma_{1}}{\sigma_{m}}>tol$.\\
(In numerical examples, we set $tol=10^{-14}$.) \\
Define $\tilde{U}=U\left[:,1:m\right]$, $\tilde{V}=V\left[:,1:m\right]$,
$\tilde{\Sigma}=\Sigma\left[1:m,1:m\right]$.
\item $\theta$-Parareal update:

If $m=1$: 

\ \ \ \ $u_{n}^{(k)}=C_{H}u_{n-1}^{(k)}+\kappa(u_{n}^{(k-1)})$. 

Else:

\ \ \ \ $I_{n,\Delta}^{(k-1)}=K_{n}^{(k-1)}\tilde{V}\tilde{\Sigma}^{-1}\tilde{U}^{T}.$ 

\ \ \ \ $\kappa^{*}=I_{n,\Delta}^{(k-1)}\left[\begin{array}{c}
u_{n-1}^{(k)}\\
1
\end{array}\right].$

\ \ \ \ If $\kappa^{*}>2\max_{j}\left\{ \|K_{n}^{(k-1)}\left[:,j\right]\|\right\} $:

\ \ \ \ \ \ \ \ $u_{n}^{(k)}=C_{H}u_{n-1}^{(k)}+\kappa(u_{n}^{(k-1)})$. 

\ \ \ \ Else:

\ \ \ \ \ \ \ \ $u_{n}^{(k)}=C_{H}u_{n-1}^{(k)}+v_{n}^{(k-1)}$.

\ \ \ \ End

End
\item Update the matrix: $W_{n}^{(k)}=\left[\begin{array}{ccc}
\begin{array}{c}
u_{n}^{(k)}\\
1
\end{array} & | & W_{n}^{(k-1)}\left[:,2:d\right]\end{array}\right].$
\end{enumerate}
\end{enumerate}
\caption{\label{alg:Interp_based_parareal}Lower dimensional interpolation
based $\theta$-parareal algorithm. }
\end{algorithm}

\section{Numerical examples}

We demonstrate some of the properties of the proposed scheme in two
areas: (1) multiscale coupling and (2) simulation of Hamiltonian systems
in long time intervals. We focus on the case in which the number of
coarse steps is significantly larger than allowed by the stability
analysis of the standard parareal, i.e., $N\gg H^{-q-1}$.
\begin{example}
A single harmonic oscillator. Consider,
\[
\left[\begin{array}{c}
q'\\
p'
\end{array}\right]=\left[\begin{array}{cc}
0 & 1\\
-\omega^{2} & 0
\end{array}\right]\left[\begin{array}{c}
q\\
p
\end{array}\right]
\]
where $\omega$ is the stiffness constant. Figure \ref{fig:harmonic_error}
depicts result obtained using Velocity Verlet for both the fine and
coarse integrators with step sizes $h$ and $H$, respectively. Initial
conditions are $q_{0}=1,\;p_{0}=-1$. Denoting, 
\[
Q_{\Delta t}=\left[\begin{array}{cc}
1-\dfrac{1}{2}\omega^{2}\Delta t^{2} & \Delta t\\
-\omega^{2}\Delta t+\dfrac{1}{4}\omega^{4}\Delta t^{3} & \;1-\dfrac{1}{2}\omega^{2}\Delta t^{2}
\end{array}\right],
\]
 yields, $F_{H}=$ $(Q_{h})^{H/h}=P\Lambda_{h}^{H/h}P^{-1}$ and $C_{H}=Q_{H}.$
Here, $P$ is the diagonalizing matrix. Hence, in this example the
optimal value of $\theta$ is constant, $\theta=F_{H}C_{H}^{-1}$. 
\end{example}
\begin{figure}
\begin{centering}
\includegraphics{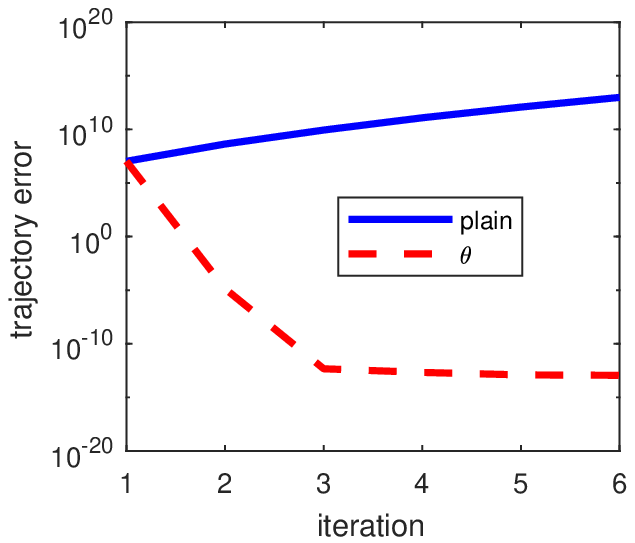}\includegraphics{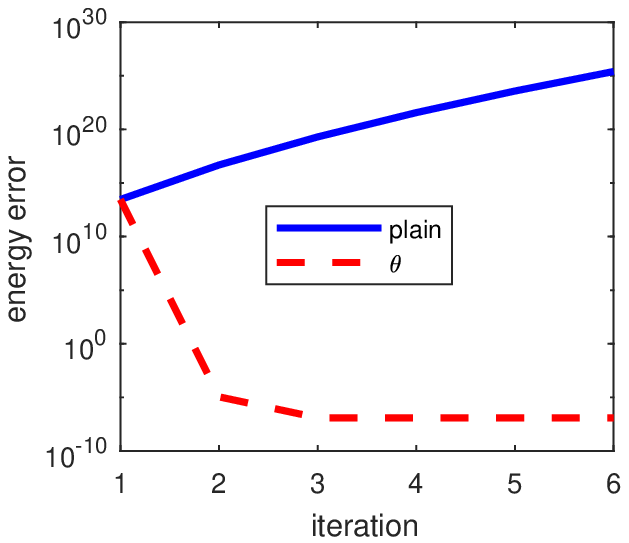}
\par\end{centering}
\caption{\label{fig:harmonic_error}Maximum error of standard (solid) and $\theta$-parareal
(dashed) on harmonic oscillator. The stiffness is $\omega=1$ and
final time is $T=10^{3}$. Fine and coarse integrator are Velocity
Verlet with step sizes $H=0.5,\;h=10^{-2}$.}
\end{figure}

\phantom{}
\begin{example}
Inhomogeneous linear system with variable-coefficients and singular
pulses-like forcing. A low dimensional example. Consider the following
forced linear system with varying coefficients,
\end{example}
\[
\left[\begin{array}{c}
x'\\
y'
\end{array}\right]=A(t)\left[\begin{array}{c}
x\\
y
\end{array}\right]+b(t),
\]
where $A(t)$ is a time dependent matrix of purely imaginary eigenvalues,
and $b(t)$ is an external force. Figure \ref{fig:variedcoefficient_error},
depicts results using the midpoint rule as a coarse integrator and
fourth order Runge-Kutta (RK4) as a fine one. Initial conditions are
$x_{0}=1,\;y_{0}=0$. The coefficient matrix is $A(t)=\left[\begin{array}{cc}
0 & 1\\
-(\cos(t)^{2}+1) & 0
\end{array}\right]$ and the forcing term is $b(t)=\sum_{i=1}^{40}e^{-50(t-t_{i})^{2}}$
where $t_{i}$ are chosen randomly in $[0,T]$. Similar to the example
above, the optimal $\theta$ is given by $\theta=\tilde{F}_{H}\tilde{C}_{H}^{-1}$.
However, in this example it is time-dependent. Disregarding the forcing
term, 
\[
\tilde{C}_{H}=\boldsymbol{1}+H\left(A(t)+\dfrac{1}{2}HA(t+\dfrac{1}{2}H)\right)
\]
\[
\tilde{F}_{H}=\left[\boldsymbol{1}+hA(t)+\dfrac{h^{2}}{2}A\left(t+\dfrac{h}{2}\right)A(t)+\dfrac{h^{3}}{6}A^{2}\left(t+\dfrac{h}{2}\right)A(t)+\dfrac{h^{4}}{24}A(t+h)A^{2}\left(t+\dfrac{h}{2}\right)A(t)\right]^{H/h}.
\]
Thus, $\theta$ needs to be computed at every coarse time step. However,
it is the same for all iterations. 
\begin{figure}
\begin{centering}
\includegraphics{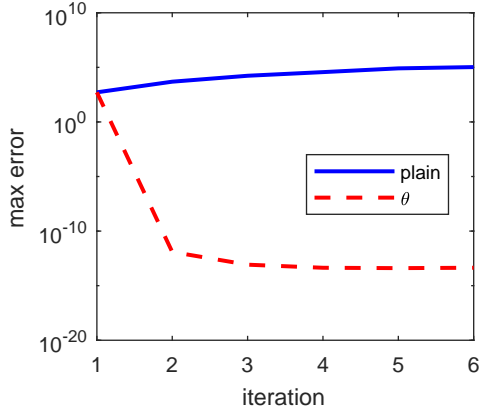}
\par\end{centering}
\caption{\label{fig:variedcoefficient_error}Error of standard (solid) and
$\theta$-parareal (dashed) methods for the variable coefficient system
with time varying frequencies and forcing, example 3.2. The coarse
integrator is midpoint with stepsize $H=0.5$ and fine integrator
is RK4 with stepsize $h=10^{-2}$. Final time $T=100$.}
\end{figure}

\phantom{}
\begin{example}
``De-homogenization''. In this example, we show how parareal can
be used to ``fill-in'' the details in multiscale numerically homogenized
solutions. Again, the motivation of this high-dimensional example
is to demonstrate how the parareal scheme can be stabilized using
a simple multiplication by a small real number $\theta$. The main
point out of this example is not about an optimal solution of the
PDE, but rather the feasibility of using the parareal framework for
multiscale computation.

We consider the following heat equation with highly oscillatory coefficient,
\begin{equation}
u_{t}^{\epsilon}=\frac{\partial}{\partial x}\left(a(x,\frac{x}{\epsilon})u_{x}^{\epsilon}\right),\,\,\,0\le x\le1,t>0,\label{eq:heat-osc-coeff}
\end{equation}
 with 
\[
a(x,y)=a_{0}+\sin(2\pi y)(1-e^{-100(x-0.133104)^{2}}),\,\,\,a_{0}=1.1\,\text{or}\,1.01.
\]
Initial condition are $u^{\epsilon}(x,0)=x(1-x)$ and $u(0)=u(1)=0.$
\end{example}
The fine integrator $F_{H}$ evolves the discretized system derived
from centered differencing for the right hand side of the differential
equation using the classical 3-point stencil on a uniform mesh, $\{x_{j}=j\Delta x:j=1,2,\cdots,M-1\}$,
with $\Delta x=\epsilon/20,\epsilon=0.04$. In parallel, each fine
integration runs 50 Crank-Nicholson (CN) steps with step size $h=H/50$,
with $H=2\Delta x$. The coarse integrator solves the homogenized
equation 
\begin{equation}
\bar{u}_{t}-\bar{A}\ \bar{u}_{xx}=0,\ \ \bar{A}=\begin{cases}
\sqrt{0.21}, & a_{0}=1.1,\\
0.141774, & a_{0}=1.01,
\end{cases}\label{eq:heat-homogenized}
\end{equation}
on the same spatial grid, with the same initial and boundary conditions
as above. Time steps are either Implicit Euler (IE) or CN, running
$100$ coarse steps using $H=2\Delta x$. 

Denoting the discrete solution as $u_{n}^{(k)}=(u_{1,n}^{(k)},u_{2,n}^{(k)},\cdots,u_{M-1,n}^{(k)}),$
where $u_{j,n}^{(k)}$ is the computed solution at grid node $x_{j}$
and time $nH$ at the $k$-th parareal iteration, the relative errors
are given by
\[
e_{n}^{(k)}:=\frac{||u_{n}^{(k)}-u_{n}^{\epsilon}||_{h}}{||u_{n}^{\epsilon}||_{h}},
\]
where $u_{n}^{\epsilon}$ is the reference solution at $t=nH$, computed
using Crank-Nicholson with step size $h$. The norm $||\cdot||_{h}$
is the usual 2-norm for grid functions, e.g. $||u_{n}^{(k)}||_{h}:=\left(\sum_{j=1}^{M-1}|u_{j,n}^{(k)}|^{2}\Delta x\right)^{1/2}$.
We report the errors at $n=75,$ which is at 3/4 of the total simulated
time steps. The purpose is simply to avoid being too close to the
largest parareal iterations that we simulate. 

Standard parareal ($\theta=1$) yields unstable iterations with both
IE and CN, unless there is dissipation. Figure \ref{fig:Stability-of-multiscale-coupling}
depicts a comparison, with dissipation term, between different values
of $\theta$ applied to the IE or CN schemes as coarse time integrators.
CN scheme require smaller values of $\theta$ (i.e., the stability
region is narrower). On the Fourier domain, we see that the amplification
factor for IE is 
\[
\hat{Q}_{IE}(\omega)=\frac{1}{1+4\sigma\sin^{2}\xi/2},
\]
 and that of CN is 
\[
\hat{Q}_{CN}(\omega)=\frac{1-2\sigma\sin^{2}\xi/2}{1+2\sigma\sin^{2}\xi/2},
\]
 where $\sigma=\bar{A}\Delta t/\Delta x^{2}$ and $\xi=\omega\Delta x$.
We see first that the parareal iterations with IE are more stable
because $|\hat{Q}_{IE}|$ is smaller than $|\hat{Q}_{CN}|$ in general.
Furthermore, for large $\sigma,$ $\hat{Q}_{CN}$ is close to $-1$
for $\xi\neq0$. In our simulations, we used $\sigma=2\Delta x^{-1}$
(i.e. $\Delta t\sim\Delta x$), thus the coupling is highly unstable
because $|C_{H}|\approx1$. Figure \ref{fig:Stability-of-multiscale-coupling}
shows that a smaller value of $\theta$ is needed to stabilize the
parareal iterations with CN. The price of using a smaller $\theta$
is that $|F_{H}-\theta C_{H}|$ is bigger and the overall amplification
factor is not as small as that using IE. 

\begin{figure}
\begin{centering}
\includegraphics{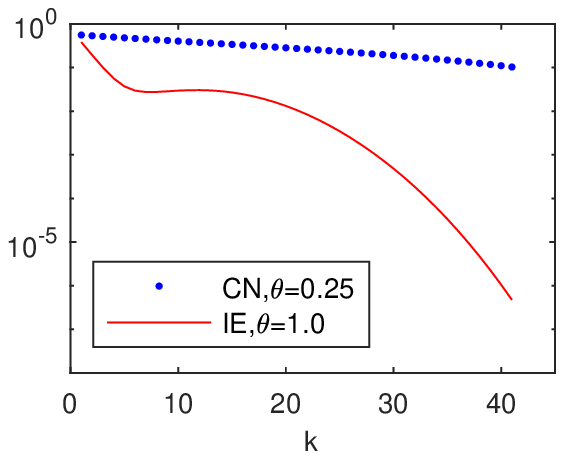}\includegraphics{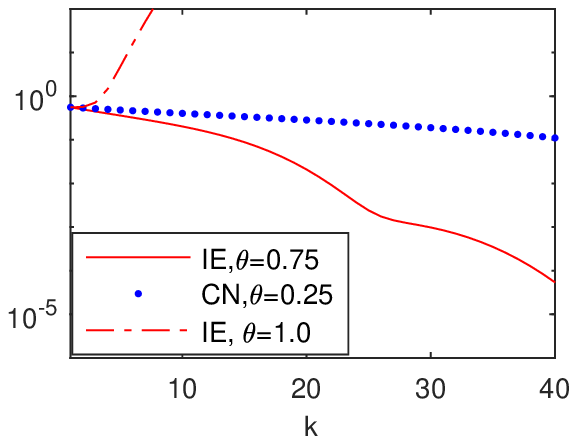}
\par\end{centering}
\caption{\label{fig:Stability-of-multiscale-coupling}Stability of the multiscale
couplings, considered in Example 3.3. The two subplots show the relative
errors computed by different values of $\theta$ applied to the IE
(Implicit Euler) and CN (Crank-Nicolson) schemes as coarse time integrators.
The left subplot shows the relative errors for the case $a_{0}=1.1$
and the right subplot shows the errors for the case $a_{0}=1.01.$
The latter case is less diffusive and corresponding requires more
stabilization.  }
\end{figure}

\begin{example}
Linear wave equation
\[
u_{tt}=c^{2}(x)\Delta u,\,\,\,0\le x<1,t\ge0
\]
 with periodic boundary condition in $x$ and the initial conditions
\[
u(x,0)=u_{0}(x):=0.1\exp(-50(x-0.5)^{2})
\]
 and $u_{t}(x,0)=0.$ The coarse solver will solve this problem using
the wave speeds $c(x)=\bar{c}(x)\equiv1$, and the fine solver will
use the wave speed 
\begin{equation}
c^{2}(x)=1-0.2e^{-2000(x-0.133104)^{2}}-0.1e^{-2000(x-0.733104)^{2}}.\label{eq:wave-speed-and-bumps}
\end{equation}
 Both shall use the following notations to denote the numerical approximation
to the solution $u(x,t)$ and $u_{t}(x,t$): 
\[
\mathbf{u}_{n,j}=\left(\begin{array}{c}
u_{n,j}\\
p_{n,j}
\end{array}\right)\approx\left(\begin{array}{c}
u(j\Delta x,n\Delta t)\\
u_{t}(j\Delta x,n\Delta t)
\end{array}\right),
\]
 where $\Delta x$ and $\Delta t$ are respectively the grid spacings
in $x$ and in $t$ used in the finite difference scheme
\begin{align}
p_{n+1,j} & =p_{n,j}+\Delta t\,c^{2}(j\Delta x)D_{+}^{x}D_{-}^{x}u_{n,j}-\alpha\Delta t\Delta x^{3}D_{-}^{t}(D_{+}^{x}D_{-}^{x})^{2}u_{n,j},\label{eq:wave-scheme-p}\\
u_{n+1,j} & =u_{n,j}+\Delta t\,p_{n+1,j},\label{eq:wave-scheme-u}
\end{align}
 with initial conditions $u_{0,j}=u_{0}(j\Delta x)$ and $p_{0,j}=0,$
$j=0,1,\cdots,\Delta x^{-1}-1$ and the boundary condition $u_{n,M}=u_{n,0},$
$n=0,1,2,\cdots,N.$ We will use $\alpha<1/15.$ The last term in
(\ref{eq:wave-scheme-p}) is a discretization of the damping term
$\alpha\Delta x^{3}u_{txxxx}$ which damps out very high frequency
Fourier components of solutions. The stability condition for this
scheme requires that $\Delta t\le\Delta x/2.$ We shall use the same
scheme for both the coarse and the fine solver. The only difference
is that the coarse solver will solve on a grid with spacing $\Delta x=H,$
$\Delta t=H/2$, and the fine on a grid with $\Delta x=H/40,$ and
$\Delta t=H/80.$ 

We present numerical results computed by the following iterations:

\begin{equation}
\mathbf{u}_{n+1}^{(k+1)}=\theta_{n}^{(k)}C_{H}\mathbf{u}_{n+1}^{(k+1)}+\mathcal{P}F_{H}\mathcal{R}\mathbf{u}_{n}^{(k)}-\theta_{n}^{(k)}C_{H}\mathbf{u}_{n}^{(k)},\,\,\,0<\theta_{n}^{(k)}\le1.\label{eq:theta-parareal-for-wave-eqn}
\end{equation}
 Here $\mathcal{R}$ is the \emph{reconstruction operator} that takes
a grid function defined on $H\mathbb{Z}\cap[0,1)$ to a grid function
defined on the finer grid $h\mathbb{Z}\cap[0,1)$; $\mathcal{P}$
is the projection operator that maps the grid function defined on
$h\mathbb{Z}\cap[0,1)$ to a grid function on the coarser grid $H\mathbb{Z}\cap[0,1)$.
In the following simulations, $\mathcal{R}$ is defined by the cubic
interpolation that assuming the grid function to be periodic on $[0,1),$
while $\mathcal{P}$ is simply taken to be the pointwise restriction
assuming that $H$ is divisible by $h$. 

In Figures~\ref{fig:wave-result-1} and \ref{fig:wave-result-errors},
we present a result computed using
\[
\theta_{n}^{(k)}=\begin{cases}
1, & n\le750\,\,\,\text{and}\,\,\,k>3,\\
1-\frac{3}{4}(nH)\,10^{-3}, & \text{otherwise.}
\end{cases}
\]
 The simulation involves $800$ coarse steps. We found that both the
stabilization term (the last term in (\ref{eq:wave-scheme-p})) as
well as $\theta_{n}^{(k)}$ being smaller than 1 for large $n$ play
important role in the stability of the parareal iterations. Standard
parareal scheme typically become very unstable in the setup considered
in this example. We also observe that even though the initial errors
is improved by over 90\% after only few iterations, improvement by
further iterations is rather small. More elaborate stabilization is
required if one wishes to speed up the convergence rate. 

\begin{figure}

\begin{centering}
\includegraphics{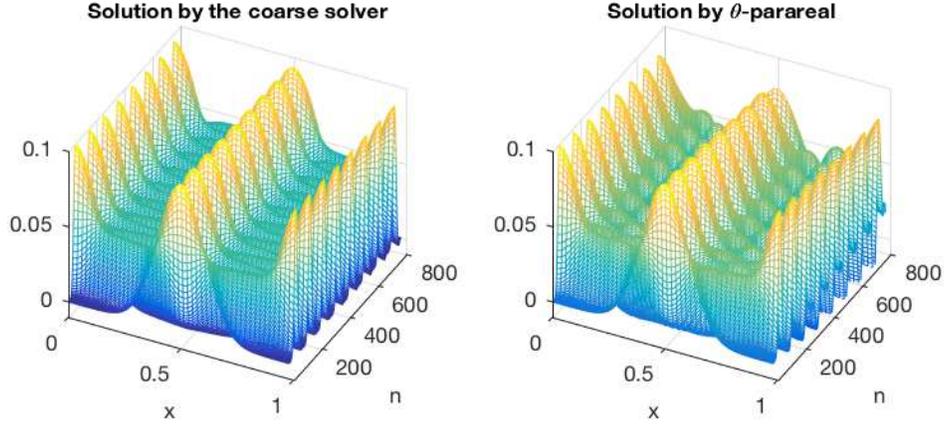}
\par\end{centering}
\caption{\label{fig:wave-result-1}Example 3.4 (wave equation). Left: Solution
computed by the coarse solver, without parareal coupling to the fine
solver. Right: The $\theta$-parareal solution computed at $k=12$.}

\end{figure}
\begin{figure}
\begin{centering}
\includegraphics{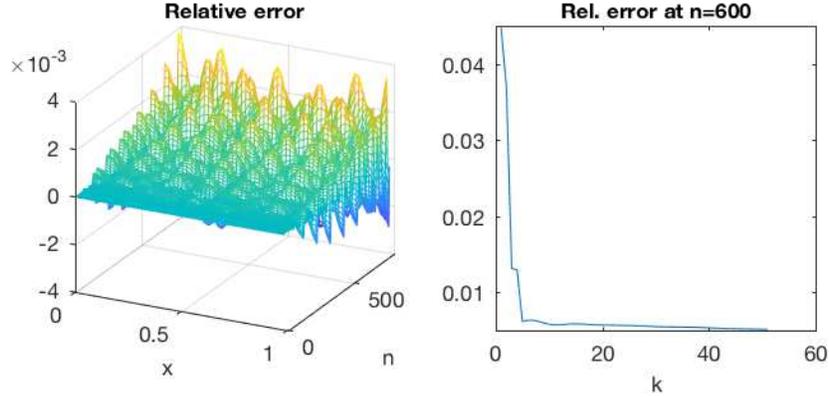}
\par\end{centering}
\caption{\label{fig:wave-result-errors}Example 3.4 (wave equation). Left:
Pointwise errors of the $\theta$-parareal solution at $k=5$. Right:
The relative error at $T_{n}=12$ (i.e. 600 coarse steps) as a function
of the parareal iteration number $k$.}
\end{figure}
\end{example}
\phantom{}
\begin{example}
A spin orbit example. The following example low-dimensional Hamiltonian
system as been studied in \cite{Jimenez2011}.
\[
\left[\begin{array}{c}
q'\\
p'
\end{array}\right]=\left[\begin{array}{c}
p\\
-2\epsilon\sin(q)-2\alpha\sin(2q+\phi)+14\alpha\sin(2q-\phi)
\end{array}\right].
\]
The initial condition is $\left[q_{0},p_{0}\right]=\left[1,0\right]$.
Figure \ref{fig:spinorbit_error} presents results for standard and
$\theta$-parareal where the optimal value $\theta$ is approximated
at each iteration and time step using the interpolation method described
in Algorithm \ref{alg:Interp_based_parareal}. Standard parareal is
unstable after a large number of steps.
\end{example}
\begin{figure}
\begin{centering}
\includegraphics{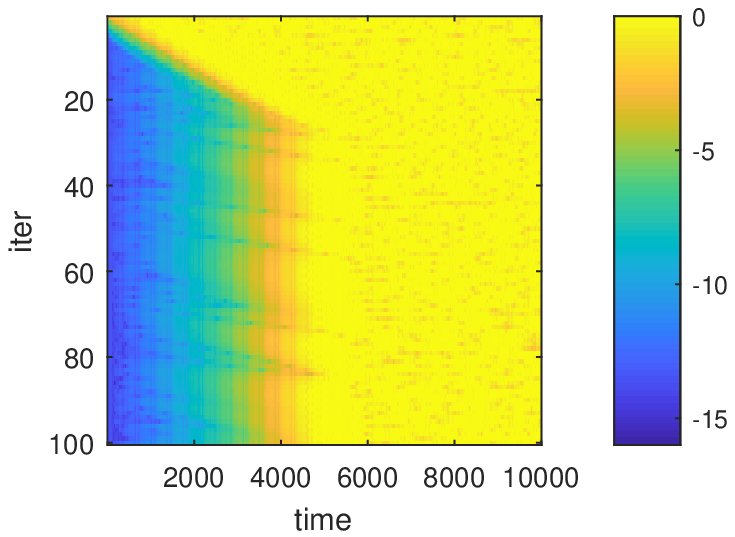}\includegraphics{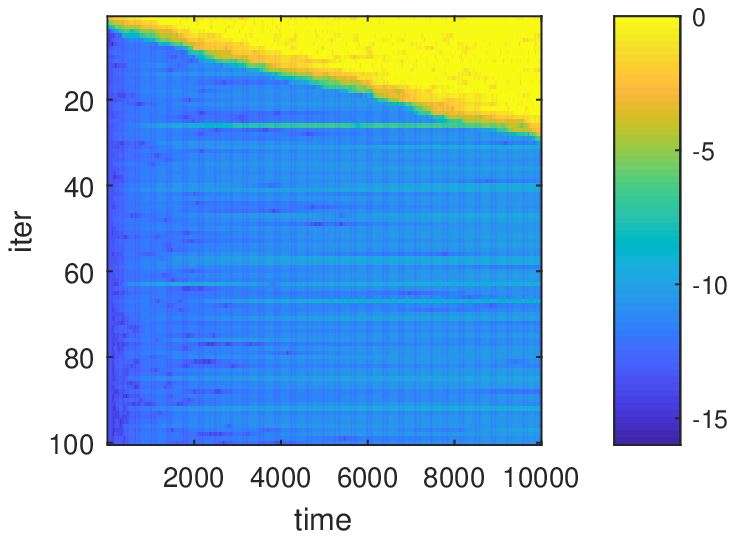}
\par\end{centering}
\caption{\label{fig:spinorbit_error}Log (base 10) errors of standard parareal
(left) and lower dimensional interpolation based $\theta$-parareal
(right) on spin orbit problem, example 3.5. The parameter is $T=10^{4},\epsilon=0.01,\alpha=10^{-4},\phi=0.2$.
Velocity Verlet is used for both fine and coarse integrators with
$h=10^{-2}$ and $H=1$. }
\end{figure}

\begin{example}
\label{exa:One-body-problem}The Kepler one-body problem in 2D. Consider,
\[
\left[\begin{array}{c}
q'\\
p'
\end{array}\right]=\left[\begin{array}{c}
p\\
-\dfrac{q}{\|q\|^{3}}
\end{array}\right],\,\,\,q(0)=1-e,\;p(0)=\sqrt{\dfrac{1+e}{1-e}},
\]
where $q(t)$, $p(t)\in\mathbb{R}^{2}$ and $0\leq e<1$ is the eccentricity.
Larger eccentricity corresponds to stiffer problem. 
\end{example}
In Figure \ref{fig:keplerinterp_error} we present a comparison of
the results computed by the standard parareal and by the interpolative
$\theta$-parareal as described in Algorithm~\ref{alg:Interp_based_parareal}.
Both fine and coarse solvers apply Velocity Verlet with step sizes
$h=10^{-4}$ and $H=0.02$. We see that on intermediate time intervals
(up to around $T=100$), the interpolative approach significantly
improves both the accuracy and stability of parareal. At longer times,
accuracy deteriorates for both methods, although slower with $\theta$-parareal.
Sequentializing sets of parareal simulations to integrate shorter
time intervals at a time greatly improves the overall accuracy of
long time scales for this types of nonlinear Hamiltonian dynamics. 

Figure~\ref{fig:Local-error-1-body} shows the local errors, $|F_{H}u_{n}^{(k+1)}-\theta_{n}^{(k)}C_{H}(u_{n}^{(k+1)})|$
with $N=1000/H=50000$ at different iterations.

\begin{figure}
\begin{centering}
\includegraphics{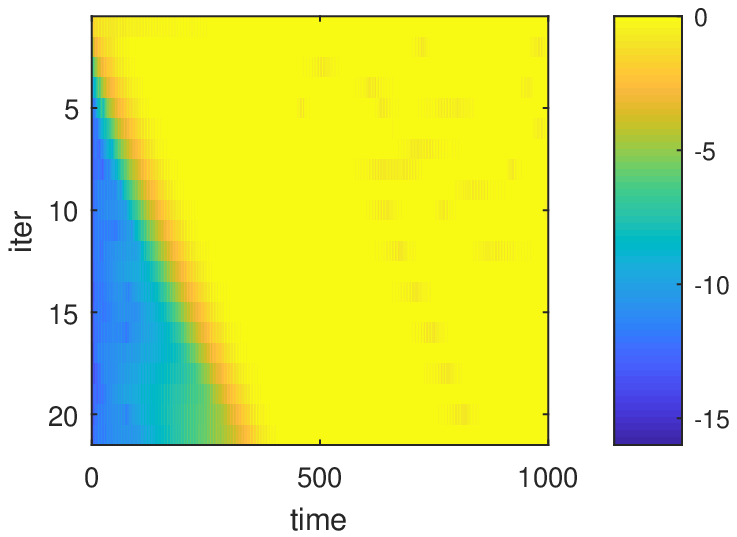}\includegraphics{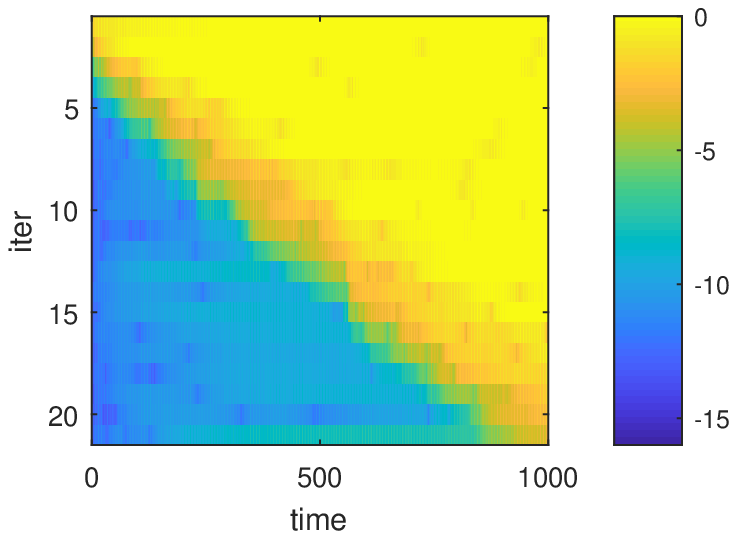}
\par\end{centering}
\begin{centering}
\includegraphics{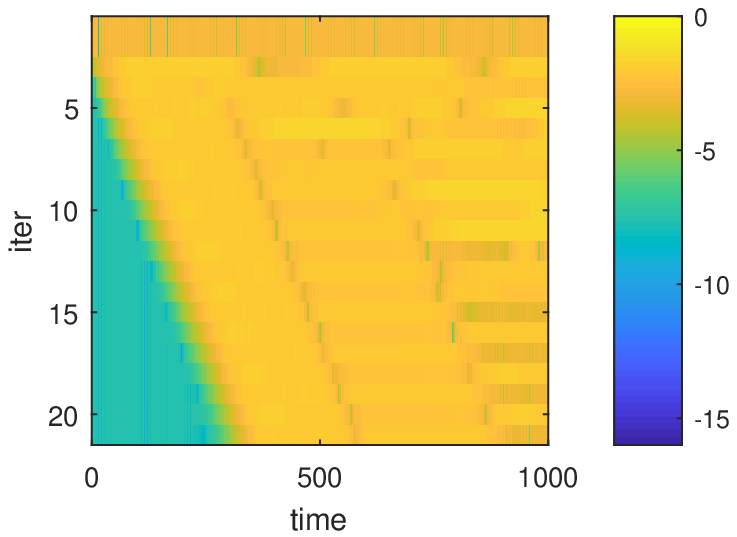}\includegraphics{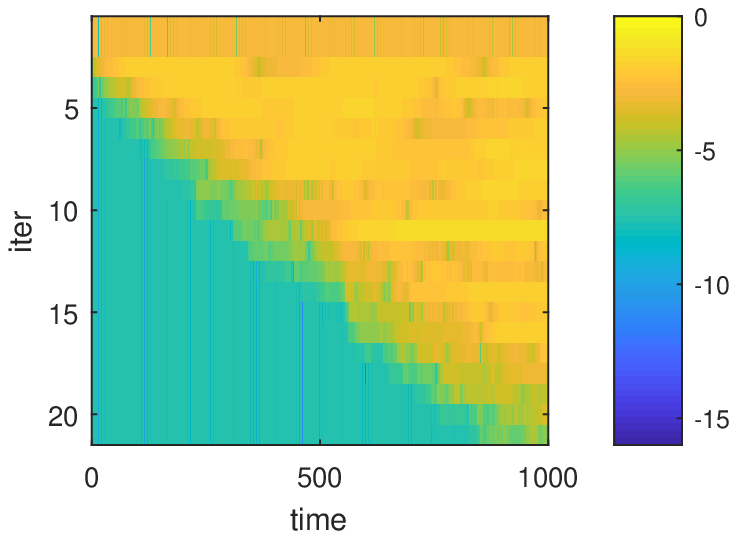}
\par\end{centering}
\caption{\label{fig:keplerinterp_error}Log (base 10) errors of the plain parareal
(left) and interpolative $\theta$-parareal (right) in trajectory
(top) and energy (bottom) for Kepler system of 1 planet in 2D, example
3.6. The errors is presented in $\log_{10}$. Eccentricity is $e=0.5$.
Fine and coarse integrator are Velocity Verlet with steps $h=10^{-4}$
and $H=0.02$. The largest trajectory error is about the diameter
of the orbit.}
\end{figure}
\begin{figure}
\begin{centering}
\includegraphics{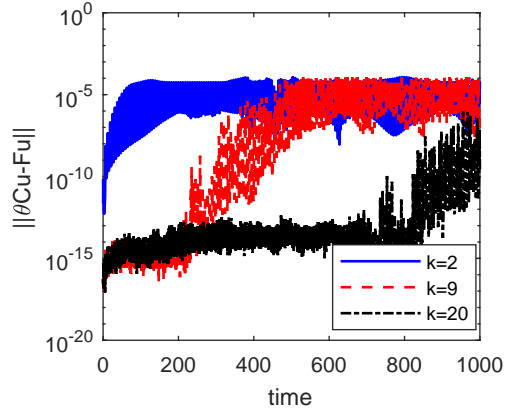}
\par\end{centering}
\caption{\label{fig:Local-error-1-body}Example 3.6: Local errors of the improved
coarse integrator $\theta_{n}^{(k)}C_{H}$. }
\end{figure}

\phantom{}
\begin{example}
\label{exa:The-two-body-Kepler-3D}The two-body Kepler problem in
3 dimensions. The two planets are described by $\mathbb{R}^{3}$,
$j=1,2,$ leading to a nonlinear system in $\mathbb{R}^{12}$.
\[
\left[\begin{array}{c}
q_{1}'\\
q_{2}'\\
p_{1}'\\
p_{2}'
\end{array}\right]=\left[\begin{array}{c}
p_{1}\\
p_{2}\\
-\dfrac{q_{1}}{\|q_{1}\|^{3}}-10^{-5}\dfrac{q_{1}-q_{2}}{\|q_{1}-q_{2}\|^{3}}\\
-\dfrac{q_{2}}{\|q_{2}\|^{3}}+10^{-5}\dfrac{q_{1}-q_{2}}{\|q_{1}-q_{2}\|^{3}}
\end{array}\right],
\]
with the initial conditions is
\[
q_{1}=\left[\begin{array}{c}
1-e_{1}\\
0\\
0
\end{array}\right],\;q_{2}=\left[\begin{array}{c}
\cos(\pi/4)(1-e_{2})\\
0\\
\sin(\pi/4)(1-e_{2})
\end{array}\right],\;p_{1}=\left[\begin{array}{c}
0\\
\sqrt{\dfrac{1+e_{1}}{1-e_{1}}}\\
0
\end{array}\right],\;p_{2}=\left[\begin{array}{c}
0\\
\sqrt{\dfrac{1+e_{2}}{1-e_{2}}}\\
0
\end{array}\right].
\]

In Figure \ref{fig:kepler3D_ReducedDim}, we present a comparison
of the results computed by the standard parareal and by the interpolative
$\theta$-parareal as described in Algorithm~\ref{alg:Interp_based_parareal}.
Both the fine and coarse solvers are Velocity Verlet with step sizes
$h=10^{-4}$ and $H=0.02$. Figure~\ref{fig:interp-dims} shows the
dimensions of the subspaces in which interpolations is computed and
the corresponding computed errors. 

In Figure~\ref{fig:kepler2interp_error3D}, we further compare the
results reported above to the one computed by sequentially applying
the interpolative $\theta$-parareal algorithm in smaller time intervals,
each of which involves $N'=500/H=25000$ steps. We see that smaller
total coarse time steps results in smaller amplification factor, which
consequently results in a significantly improved accuracy. 
\end{example}
\begin{figure}
\begin{centering}
\includegraphics{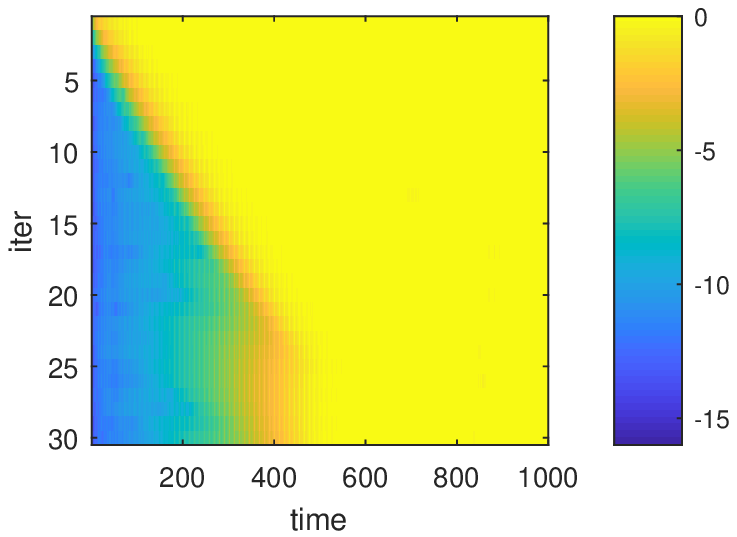}\includegraphics{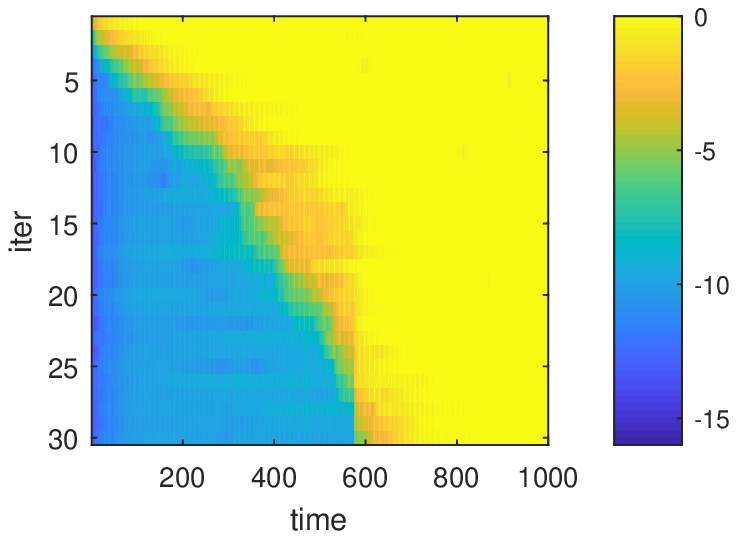}
\par\end{centering}
\begin{centering}
\includegraphics{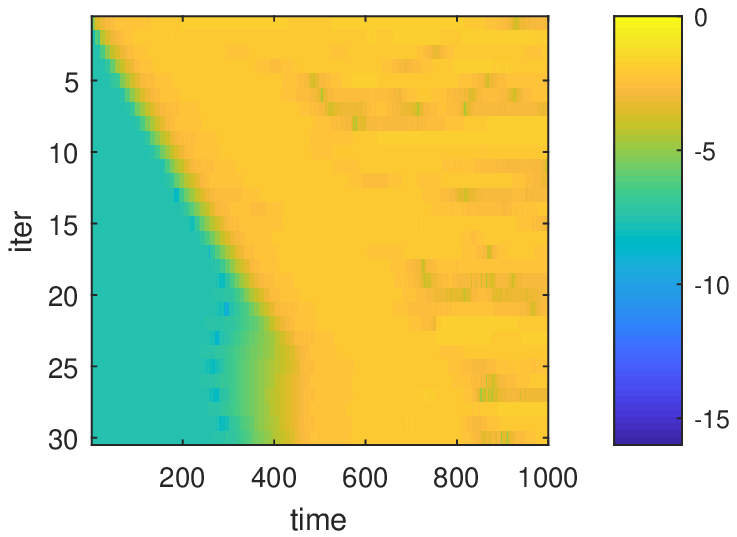}\includegraphics{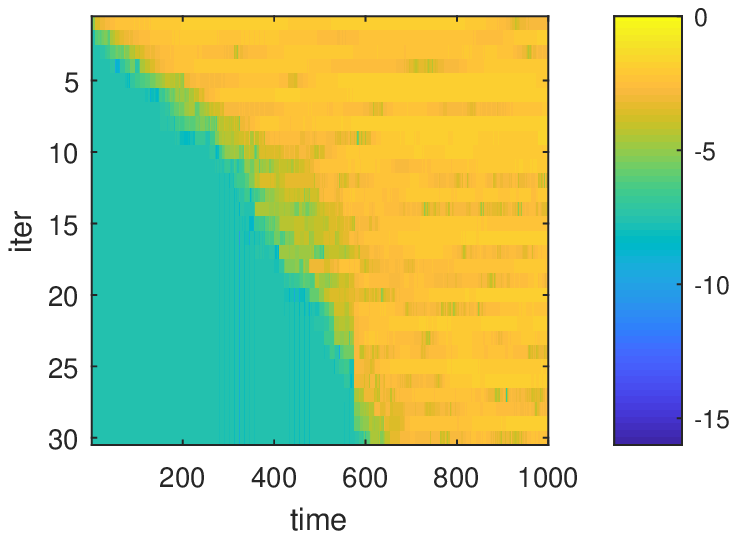}
\par\end{centering}
\caption{\label{fig:kepler3D_ReducedDim}Log (base 10) errors of plain parareal
(left column) and $\theta$-parareal lower dimension (right column).
Errors in trajectory (top row) and energy (bottom row) for a Kepler
system of 2 planets in 3D, example 3.8. Eccentricities are $e_{1,2}=0.4,0.5$.
Fine and coarse integrator are Velocity Verlet with $h=10^{-4}$ and
$H=0.02$. The interaction coefficient between the two masses is $g_{12}=10^{-5}$.}
\end{figure}

\begin{figure}
\begin{centering}
\includegraphics{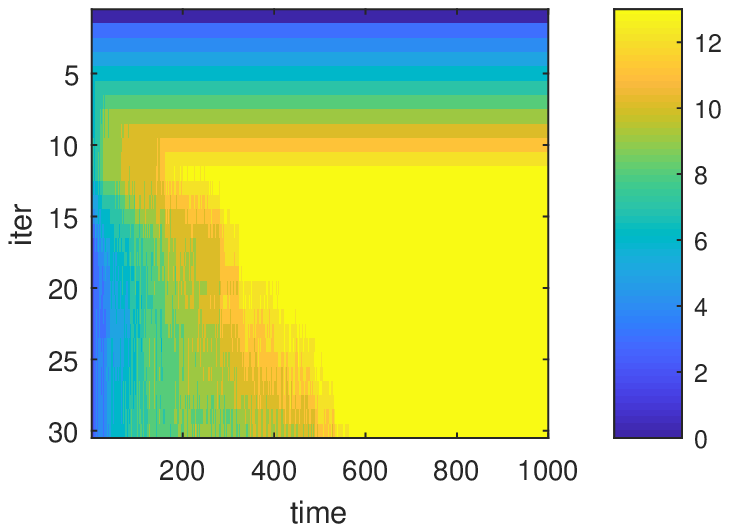}\includegraphics{Kepler2planet3D_ec0405_H002_T1E3_K30_newinterp_TrajError}
\par\end{centering}
\begin{centering}
\includegraphics{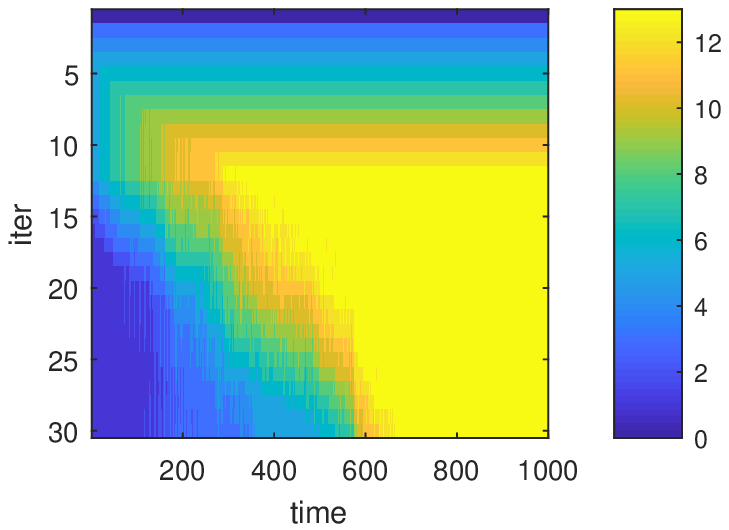}\includegraphics{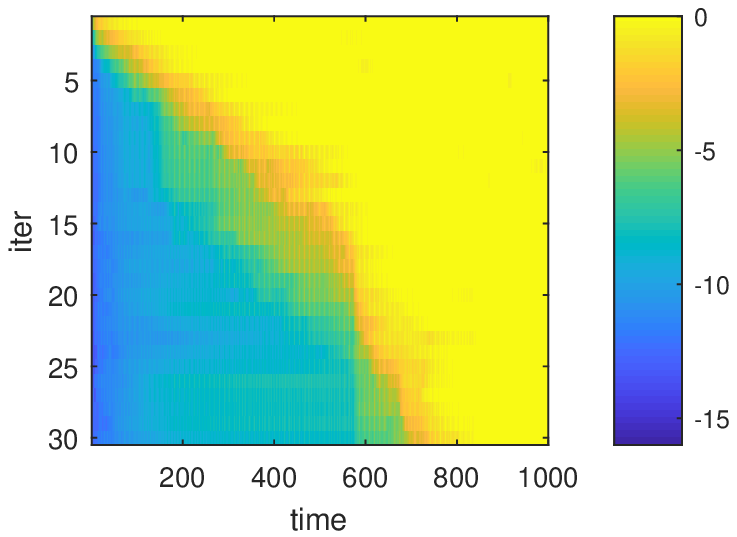}
\par\end{centering}
\begin{centering}
\includegraphics{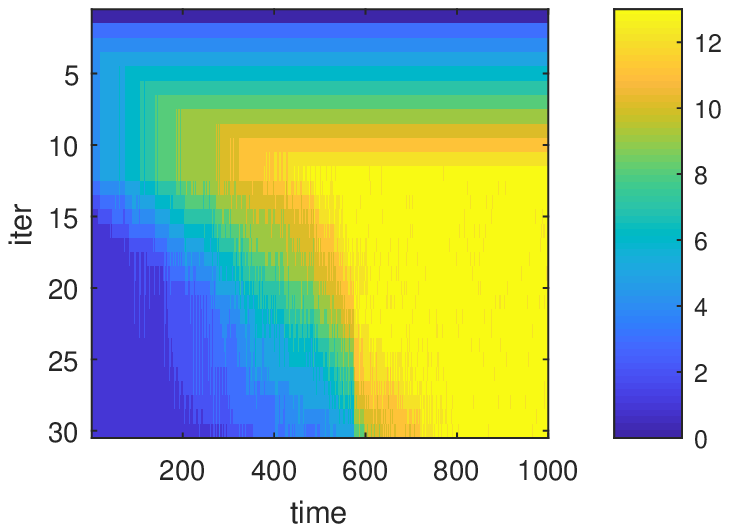}\includegraphics{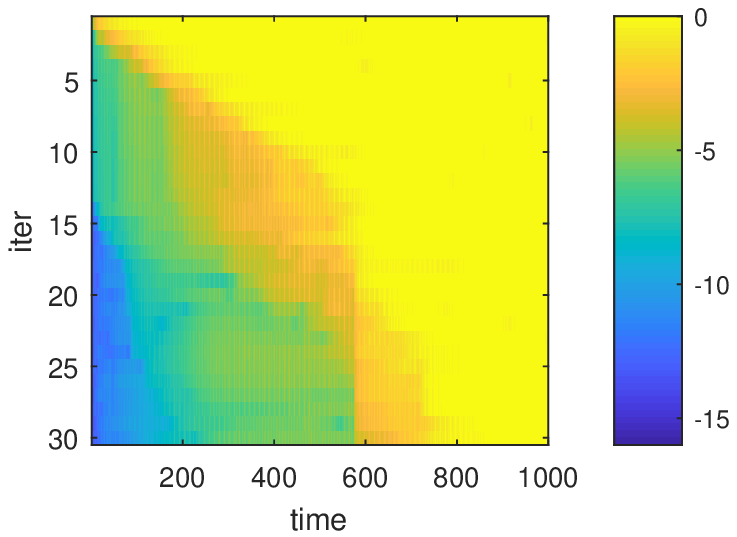}
\par\end{centering}
\caption{\label{fig:interp-dims}Number of singular values used in the subspace
interpolation parareal (left column) and its corresponding errors
(right column) for the two body Kepler problem in 3D, example 3.8.
The tolerance parameter $tol$ in Algorithm~\ref{alg:Interp_based_parareal}
is set to $10^{-14}$ (top row), $10^{-10}$ (middle row), $10^{-6}$
(bottom row). }
\end{figure}

\begin{figure}
\begin{centering}
\includegraphics{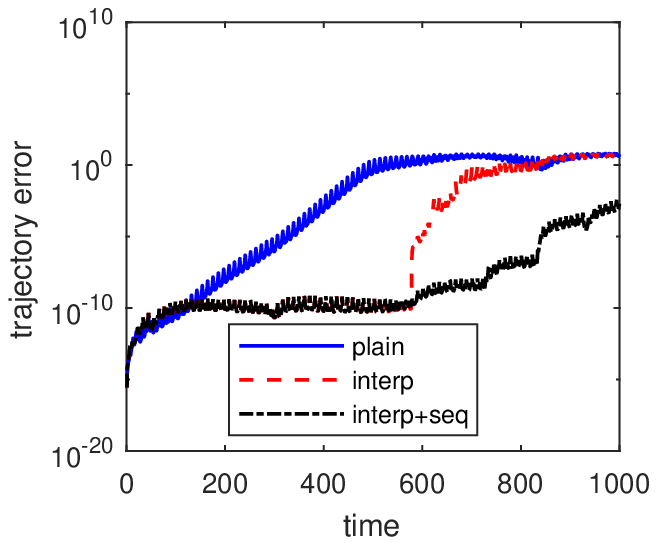}\includegraphics{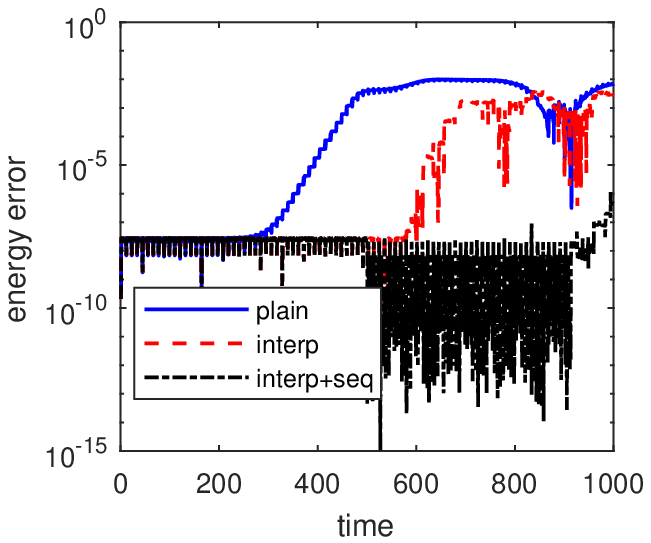}
\par\end{centering}
\caption{\label{fig:kepler2interp_error3D}Error of lower dimensional interpolation
$\theta$-parareal with sequential approach comparing to standard
parareal on Kepler system of 2 planets in 3D, example 3.8. After $K=30$
iterations. For sequential parareal, the cut-off time is at $t=500$. }
\end{figure}

\section{Summary}

In this paper, we proposed a class of parallel-in-time numerical integrators,
built on top of the framework of the parareal schemes. The integrators
are conceived specifically with the objectives of allowing stable
coupling between the chosen coarse- and fine- integrators, which may
be consistent with similar but different differential equations. 

The stability of parareal iterations is largely determined by the
sums of amplification factor of the coarse integrator. While for dissipative
problems, the amplification factor of typical schemes will be strictly
less than one, purely oscillatory problems tend to preserve certain
invariances and thus the amplification factor has modulus $1$. In
this case, we have shown that the $\theta$-parareal method we suggest
may enjoy favorable stability properties compared to the standard
$\theta=1$ case.

The simplest form of the proposed scheme is a small modification to
the original parareal scheme, obtained by multiplying the coarse integrator
by a constant. Hamiltonian or high-dimensional systems require more
complicated methods, for example, using interpolation of data points
obtained in previous iterations. We analyzed the convergence of such
approaches, and presented numerical simulations that enjoyed a few
more digits in accuracy when compared to the results computed by the
standard parareal algorithms. 

\section*{Acknowledgments}

Tsai is supported partially by NSF DMS-1620396 and ARO Grant No. W911NF-12-1-0519.
Nguyen is supported by an ICES NIMS fellowship. Tsai also thanks National
Center for Theoretical Sciences Taiwan for hosting his visits where
part of this research was conducted. 

\bibliographystyle{plabbrv}
\bibliography{../../Paper_Submission_1/Paper/parareal_literatures}

\end{document}